\newtheorem{theorem}{Theorem}[section]
\newtheorem{lemma}{Lemma}[section]
\newcommand{\Q}{{\mathbb Q}}
\newcommand{\Z}{{\mathbb Z}}
\newcommand{\C}{{\mathbb C}}
\newcommand{\pte}{\text{PTE}}
\title[The Diophantine equation $f(x)=g(y)$]
{The Diophantine equation $f(x)=g(y)$ for polynomials with simple rational roots}
\author[L. Hajdu]{L. Hajdu}
\author[R. Tijdeman]{R. Tijdeman}
\address{L. Hajdu\newline
\indent Institute of Mathematics, University of Debrecen, \newline
\indent P. O. Box 12, H-4010 Debrecen, Hungary}
\email{hajdul@science.unideb.hu}
\address{R. Tijdeman\newline
\indent Mathematical Institute\newline
\indent Leiden University\newline
\indent Postbus 9512, 2300 RA Leiden, The Netherlands}
\email{tijdeman@math.leidenuniv.nl}
\thanks{Research of L.H. supported in part by the NKFIH grants 115479, 128088, and 130909, and the projects EFOP-3.6.1-16-2016-00022 co-financed by the European Union and the European Social Fund.}
\subjclass[2010]{11N32, 11B75, 11D41}
\keywords{Polynomials with rational roots, polynomial values, equal values, the Prouhet-Tarry-Escott problem, products of integers from a block of bounded length}
\begin{document}

\begin{abstract}
In this paper we consider Diophantine equations of the form $f(x)=g(y)$ where $f$ has simple rational roots and $g$ has rational coefficients. We give strict conditions for the cases where the equation has infinitely many solutions in rationals with a bounded denominator. We give examples illustrating that the given conditions are necessary. It turns out that such equations with infinitely many solutions are strongly related to Prouhet-Tarry-Escott tuples. In the special, but important case when $g$ has only simple rational roots as well, we can give a simpler statement. Also we provide an application to equal products with terms belonging to blocks of consecutive integers of bounded length. The latter theorem is related to problems and results of Erd\H{o}s and Turk, and of Erd\H{o}s and Graham.
\end{abstract}

\maketitle

\section{Introduction}

Let $a_1,\dots,a_k$ be distinct rationals and $a_0\in\Q$ with $a_0\neq 0$. Put
\begin{equation}
\label{fx}
f(x)=a_0(x-a_1)\cdots(x-a_k)
\end{equation}
and let $g(y)\in\Q[y]$. In this paper we investigate for which $f,g$ equation
\begin{equation}
\label{maineq}
f(x)=g(y)
\end{equation}
has infinitely many solutions. Moreover, we study for which $f,g$ this holds if $g$ is of the form 
\begin{equation}
\label{gy}
g(y)=b_0(y-b_1)\cdots(y-b_\ell),
\end{equation}
where $b_1,\dots,b_\ell$ are distinct elements of $\Q$ and $b_0\in\Q$ with $b_0\neq 0$. We say that an equation $f(x)=g(y)$ has infinitely many rational solutions with a bounded denominator if there exists a positive integer $\Delta$ such that $f(x)=g(y)$ has infinitely many solutions $(x,y)\in \Q^2$ with $(\Delta x, \Delta y) \in \Z^2$. Our focus is the question for which $f,g$ equation \eqref{maineq} has infinitely many solutions $(x,y)\in \Q^2$ with a bounded denominator.

Using results of Bilu and Tichy \cite{bt} and of Davenport, Lewis and Schinzel \cite{dls}, both based on a theorem of Siegel \cite{sie}, we prove the following theorem.

\begin{theorem}
\label{caseA}
Let $f(x) \in \Q[x]$ have only simple rational roots and let $g(x) \in \mathbb{Q}[x]$. 
Suppose the equation $f(x)=g(y)$ has infinitely many solutions $(x,y) \in\Q^2$ with a bounded denominator. 

Then there exist $m \in \{1,2,3,4,6\}$, $n,s \in \Z_{>0}$ or $n \in \{1,2\}, m,s \in \Z_{>0}$ such that $\deg(f)=ms$, $\deg(g)=ns$.

If also $g$ has only simple rational roots and $\deg(f) \leq \deg(g)$, then there exist $m \in \{1,2\}$, $n,s \in \Z_{>0}$ such that $\deg(f)=ms$, $\deg(g)=ns$.
\end{theorem}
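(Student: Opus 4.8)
The plan is to run the Bilu--Tichy machinery again, now exploiting the simple-roots hypothesis on \emph{both} $f$ and $g$. As in the first part, infinitely many solutions with bounded denominator force a decomposition $f=\phi\circ f_1\circ\lambda$, $g=\phi\circ g_1\circ\mu$ with $\lambda,\mu\in\Q[x]$ linear, $\phi\in\Q[x]$, and $(f_1,g_1)$ a standard pair over $\Q$; put $s=\deg\phi$, $m=\deg f_1$, $n=\deg g_1$, so that $\deg f=ms$ and $\deg g=ns$, and $\deg f\le\deg g$ gives $m\le n$. The first thing I would prove is a ``simple roots of a composition'' lemma: if $\phi\circ h$ has only simple rational roots, then $\phi$ has only simple rational roots and, for every root $c$ of $\phi$, the polynomial $h-c$ splits into distinct linear factors over $\Q$ (a root of $\phi\circ h$ lying over $c$ has multiplicity equal to the product of the multiplicity of $c$ in $\phi$ and the ramification of $h$ there, so both must be $1$). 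Applying this to $h=f_1\circ\lambda$ and to $h=g_1\circ\mu$ yields the key reduction: \emph{for each root $c$ of $\phi$, both $f_1-c$ and $g_1-c$ split into distinct linear factors over $\Q$}, and there is at least one such $c$ since $s\ge 1$.

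The bulk of the argument is then a case analysis over the five kinds of standard pairs, governed by two splitting principles. First, a pure power $x^q$ can satisfy the condition only for $q\le 2$, since $x^q-c$ has $q$ distinct rational roots only if the ratio of two of them is a rational root of unity, forcing $q\le 2$. Second, a Dickson polynomial $D_k(x,a)$ can do so only for $k\in\{1,2,3,4,6\}$: the roots of $D_k(x,a)-c$ are $2\sqrt a\cos(\beta+2\pi j/k)$, and their simultaneous rationality forces $2\cos(2\pi/k)\in\Q$. For pairs of the first and second kind one component is $x^q$ (or $x^2$); the splitting condition forces that component to have degree $\le 2$, and together with $m\le n$ this gives $m\in\{1,2\}$. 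The fifth kind is eliminated outright, because its degree-$6$ component $(ax^2-1)^3$ can never split into distinct linear factors over $\Q$ (for $c\ne 0$ the equation $(ax^2-1)^3=c$ has at most two rational solutions). For the Dickson pairs of third and fourth kind the criterion gives $m,n\in\{1,2,3,4,6\}$; running through the coprime, respectively $\gcd=2$, pairs with $m\le n$ and using the freedom to rewrite $\deg f=ms$, $\deg g=ns$ with a possibly larger $s$ (e.g.\ the fourth-kind pair $(4,6)$ becomes $m=2$, $s\mapsto 2s$, $n=3$), every case yields $m\in\{1,2\}$ \emph{except} the single third-kind pair $(m,n)=(3,4)$, for which $\gcd(\deg f,\deg g)=\gcd(3s,4s)=s$ admits no rewriting with $m\in\{1,2\}$.

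Hence the crux, and the step I expect to be the main obstacle, is to exclude the third-kind pair $(D_3(x,a^4),D_4(x,a^3))$: since $s=1$ is allowed, a single common splitting value $c$ would realize $(\deg f,\deg g)=(3,4)$ and defeat the conclusion, so I must show that \emph{no} rational $a$ and $c$ make $D_3(x,a^4)-c$ and $D_4(y,a^3)-c$ split into distinct linear factors over $\Q$ simultaneously. Writing out the two conditions, the first is a rational point on the conic $P^2+3w^2=4a^4$ with $c=P(a^4-3w^2)$, and the second a rational point on $P'^2+Q'^2=4a^3$ with $c=2a^6-P'^2Q'^2$; eliminating the parameters, a common $c$ corresponds to an admissible rational point on an explicit curve, and the goal is to prove this curve carries none --- plausibly via a descent or local obstruction, or a genus computation together with Faltings' theorem. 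The real-value ranges of the two $c$-expressions overlap, so no elementary incompatibility (in the shape of $a$, say) is available, and the delicate simultaneous total splitting of two Dickson polynomials of coprime degrees is where essentially all the difficulty of the statement is concentrated. Once this pair is ruled out, $m\in\{1,2\}$ in every surviving case, and taking the corresponding $m,n,s$ completes the proof.
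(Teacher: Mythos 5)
Your overall architecture (Bilu--Tichy decomposition, a splitting lemma for compositions, then a case analysis over the five kinds) matches the paper, and several of your steps are sound: the elimination of pure powers of degree $>2$, the elimination of the fifth kind via the factor $(\alpha x^2-1)^3$, and, notably, your observation that the fourth-kind pair $(m,n)=(4,6)$ is harmless for the \emph{numerical} statement because $\deg(f)=4s$, $\deg(g)=6s$ can be rewritten as $m=2$, $n=3$ with $s$ doubled --- the paper instead proves the stronger structural fact (Theorem \ref{impos}, via positivity of the Dickson parameter $b$ from Theorem \ref{remar} and a leading-coefficient sign argument) that this pair cannot occur at all when both $f$ and $g$ split. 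Your claim that rationality of all roots of $D_k(x,a)-c$ forces $2\cos(2\pi/k)\in\Q$ is true but not justified as written; it can be made rigorous by noting that the roots are $x_j=\zeta_0\varepsilon^j+(a/\zeta_0)\varepsilon^{-j}$ and satisfy $x_{j+1}+x_{j-1}=(\varepsilon+\varepsilon^{-1})x_j$, so any nonzero root exhibits $\varepsilon+\varepsilon^{-1}$ as a ratio of rationals, giving $\varphi(k)\le 2$. Interestingly, this is \emph{simpler} than the paper's own proof of Lemma \ref{lemdickson}, which only gets $\varphi(N)\le 4$ cheaply and then eliminates $N=5,8,10,12$ by a delicate analysis of quadratic subfields with Magma-computed factorizations.

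The genuine gap is exactly where you predicted: the third-kind pair $(m,n)=(3,4)$. You reduce it to showing that no rational $a,c$ make $D_3(x,a^4)-c$ and $D_4(y,a^3)-c$ split simultaneously into distinct rational linear factors (this is indeed equivalent to what must be proved, since by \eqref{dmndnm} the equation $D_3(x,b^4)=D_4(y,b^3)$ \emph{automatically} has infinitely many integral solutions, so simultaneous splitting with $s=1$ would be an outright counterexample), but you do not prove it, and your proposed fallback is methodologically insufficient: Faltings' theorem yields \emph{finiteness} of rational points on your eliminated curve, whereas you need \emph{emptiness} --- a single admissible point would defeat the theorem, so a genus computation cannot close the case, and no descent or local obstruction is exhibited. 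The paper's route is different and is where its real work lies: from infinitude of solutions and Lemma \ref{dals} (Davenport--Lewis--Schinzel) it deduces that every root of $D(z)=\mathrm{disc}(U(x)+z)$ must be a root of $E(z)=\mathrm{disc}(V(y)+z)$; explicit computation shows the roots of $E(z)$ are rational, forcing $3(A_1^2+A_1A_2+A_2^2)$ to be a rational square, which Lemma \ref{lemquad} parametrizes; comparing the resulting root expressions with the fact that products of pairs of roots of $E(z)$ are $\pm$ squares, while $2\cdot 54=108$ is not a square, forces a root of $D(z)$ (hence of $E(z)$) to vanish, contradicting the distinctness of the roots of $g$. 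Without this (or some other completed) argument for $(3,4)$, your proof establishes only the first statement of the theorem, not the second.
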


The first statement will be proved in Section 7. After the proof we shall argue that if $m \in \{1,2,3,4,6\}$ for every such $m,n,s$ a pair of polynomials $(f,g)$ can be constructed with $f$ having only simple rational roots such that $f(x)=g(y)$ has infinitely many integral solutions $(x,y)$. For the remaining cases, see Section 11. 

The second statement will be proved in Section 9. Observe that it follows that $\deg(f) \mid 2\deg(g)$. 
\vskip.2cm
As illustration of Theorem \ref{caseA} we present some nontrivial examples. Later more examples will follow.

\vskip.2cm

\noindent
{\bf Example 1.1.} An example of the second statement where $\deg(f)$ does not divide $\deg(g)$.
Let
$$
f(x)=(x-6)(x+6), \ \ \ g(y)= (y-1)(y-4)(y-9).
$$
Then $f(x)=g(y)$ has solution
$$
(x,y)=(X(X^2-7), X^2)\ \ \ \text{for every}\ X\in\Z.
$$

\vskip.2cm

\noindent
{\bf Example 1.2.} An example of the second statement where $\deg(f)$ divides $\deg(g)$. Let
$$
f(x)=(x-7)(x-1)(x+1)(x+7),~~ g(y)=4(y-5)(y-1)(y+1)(y+5).
$$
Consider the Pell equation $x^2 = 2y^2-1$. It has solutions $(X_i,Y_i)_{i=1}^{\infty}$ given by $(X_1,Y_1) = (1,1)$, $(X_2,Y_2) =(7,5)$ and
$$
X_{i+1} = 6X_i-X_{i-1}, \ \ \ Y_{i+1}=6Y_i-Y_{i-1}\ \ \ (i=2,3, \dots).
$$
Then $f(x)=g(y)$ has as solution
$$
(x,y) = (X_i,Y_i)\ \ \ \text{for every}\ i\in\Z_{>0}.
$$

\vskip.2cm

\noindent
{\bf Example 1.3.} An example of the first statement for $m=3$, $n=4$ and $s=1$. Let
$$
f(x) = (x+286)(x+13)(x-299),~~ g(y) = y^4 - 8788y^2 +8541936.
$$
For every $X\in \Z$ there is a solution
$$
(x,y) = (X^4-52X^2+338, X^3-39X).
$$



\vskip.2cm

In Section \ref{ho} we give a historical overview of the literature on equations $f(x)=g(y)$ where $f$ has only simple rational roots. In Section \ref{secbt} we present the Bilu-Tichy decomposition which is fundamental for our treatment. Bilu and Tichy distinguish five kinds of standard pairs. We exclude the fifth kind and rephrase Theorem \ref{caseA} as Lemma \ref{btcons}. In Section \ref{secpte} we present Prouhet-Tarry-Escott (PTE-)sets, an extension of ideal PTE-pairs. In Section \ref{sec12} we consider standard pairs of the first and second kind where $g$ need not satisfy \eqref{gy}. In the next section we assume that $g$ satisfies \eqref{gy} too. Section \ref{sec23} deals with standard pairs of the third and fourth kind where $g$ need not satisfy \eqref{gy}. Section \ref{sec34} restricts the cases with standard pairs of the third and fourth kind if $g$ has only simple rational roots, too. In Section \ref{secnew} we give a more precise statement than Theorem \ref{caseA} under \eqref{gy} which completes the proof of Theorem \ref{caseA}. We give an application of our results to equal products with terms belonging to blocks of consecutive integers of bounded lengths in Section \ref{seceb}. We finish the paper with some conclusions and open problems.

\section{Historical overview}
\label{ho}

There are numerous publications on the title equation where $f$ has only simple rational roots. In most of them the roots of  $f$ and $g$ form almost arithmetic progressions. This overview is far from complete and the results in the mentioned papers are mostly more general than cited.

\subsection{The roots of $f$ form an arithmetic progression and $g$ is almost a perfect power.}
First we consider the case that the roots of $f$ form an arithmetic progression and $g$ is almost a perfect power, more precisely:
\begin{equation}
\label{euler}
x(x+d)\cdots(x+(k-1)d)=b_0y^{\ell}+b_{\ell}
\end{equation}
where $b_0$, $b_{\ell}$, $d$, $k$ and $\ell$ are integers with $k>1$, $\ell > 1$, $k\ell >4$, $b_0 \neq 0$ $\ell$-th power free, the greatest prime factor of $b_0$ is at most $k$ and solutions $(x,y) \in \Z^2$ satisfy $\gcd(x,d)=1$, $y>1$. (If $k= \ell = 2$, then we may have a Pell equation which has infinitely many solutions.) If $b_{\ell}=0$, then there are only finitely many solutions according to a theorem of Siegel \cite{sie26} if $\ell>2$ and by a result of Schinzel \cite{sch67}, Corollary 7 if $\ell = 2$. 

Let $d=1$. In 1975 Erd\H{o}s and Selfridge \cite{es} proved that equation \eqref{euler} has no solutions when $b_0=1$, $b_{\ell}=0$. Erd\H{o}s \cite{euj} for $k \geq 4$ and Gy\H{o}ry \cite{gyuj} for $k=2,3$ showed that the equation ${x+k-1 \choose k} = y^{\ell}$, which agrees with the case $b_0=k!$, $b_{\ell}=0$ in \eqref{euler}, has only the solution ${50 \choose 3} = 140^2$. Saradha \cite{sar97} for $k\geq 4$ and Gy\H{o}ry \cite{gy98} for $k=2,3$ proved that equation \eqref{euler} with $b_0>1$, $b_{\ell}=0$ has no solution provided that the greatest prime factor of the left-hand side is larger than $k$. Bilu, Kulkarny and Sury \cite{bks} proved that equation \eqref{euler} has only finitely many solutions $(k, \ell, m,n)$ if $b_{\ell}$ is not a perfect power and that all solutions can be explicitly determined. For more results with $d=1$ see \cite{coh}, \cite{gp}, \cite{yua}.

Next let $d>1$, $b_0=1$, $b_{\ell}=0$. A famous result due to Euler is that the product of four distinct positive integers in arithmetic progression cannot be a square. Gy\H{o}ry, Hajdu and Saradha \cite{gyhs} generalized this by proving that the product of four or five consecutive terms of an arithmetic progression with $d$, $x$ coprime cannot be a perfect power. This result has been extended to at most $11$ terms by Bennett, Bruin, Gy\H{o}ry and Hajdu \cite{bbgh} and to at most $34$ terms by Gy\H{o}ry, Hajdu and Pint\'er \cite{gyhp}. Another generalization of Euler's result in the square case by Hirata, Laishram, Shorey and Tijdeman \cite{hlst} extending \cite{bbgh}, \cite{gyhp}, \cite{obl} is that equation \eqref{euler} with $4 \leq k \leq 109$, $\ell = 2$ has no solutions. For a similar result for $\ell = 3$ see Hajdu, Tengely and Tijdeman \cite{htt}, and for $\ell = 5$ see Hajdu and Kov\'acs \cite{hk11}. Bennett \cite{ben18} obtained the following strong finiteness result: There exist at most finitely many integer tuples $d,k,\ell,x,y$, with $4 \leq k\leq 15177$ for which equation \eqref{euler} is satisfied. Bennett and Siksek \cite{bs20} proved for some effectively computable $k_0$ that for fixed $k>k_0$ there are only finitely many integers $d,\ell,x,y$ satisfying equation \eqref{euler}. For some other papers in this case see \cite{fls12}, \cite{sar97}, \cite{sar98}, \cite{sh96}. 

Case $d>1$, $b_0>1$. Saradha and Shorey \cite{ss05} proved that for $d$ at most some explicitly given $d_0=d_0(\ell)$ and $b_{\ell}=0$ equation \eqref{euler} has no solutions. It follows from Yuan \cite{yua} that if $k \geq 8$ then all solutions of \eqref{euler} satisfy $\max(x, y, \ell) <C$ where $C$ is an effectively computable constant depending only on $k, b_0, b_{\ell}$. For other results with $b_{\ell}=0$ see \cite{fls12}, \cite{ls07}, \cite{ms03}, \cite{ss031}, \cite{ss05}. For a more complete survey of papers on equation \eqref{euler} see \cite{sh06}.

\subsection{The roots of $f$ form almost an arithmetic progression and $g$ is almost a perfect power.}
First we turn to the case that the roots of $f$ form an arithmetic progression with some terms missing, more precisely, to the equation
\begin{equation}
\label{shorey}
(x+d_1d)\cdots(x+d_{k}d)=b_0y^{\ell} + b_{\ell}
\end{equation} 
where $0 \leq d_1 < d_2< \dots < d_{k} < K$, $d, b_0, b_{\ell}$ and $\ell$ are integers with $k>2$, $\ell > 1$, $b_0$ is $\ell$-th power free, the greatest prime factor of $b_0$ is at most $k$ and solutions $(x,y) \in \Z^2$ satisfy $\gcd(x,d)=1$.

Several papers deal with the case $K-k=1$. Saradha and Shorey \cite{ss01}, Hanrot, Saradha and Shorey \cite{hss01} and Bennett \cite{ben04} together proved that for $d=K-k=b_0=1$, $b_\ell=0$ the only solutions of \eqref{shorey} are given by $4!/3=2^3$, $6!/5 = 12^2$, $10!/7 = 720^2$. For other papers with $K-k=1$, $b_\ell=0$ see \cite{dls18a}, \cite{ss03}, \cite{ss031}, \cite{ss07}, \cite{ss15}. Hajdu and Papp \cite{hp20} proved that equation \eqref{shorey} with $K-k=1$, $K \geq 8$ has only finitely many solutions $x,y,\ell$.

Mukhopadhyay and Shorey \cite{ms03} for $\ell=2$ and Saradha and Shorey \cite{ss08} for $\ell \geq 3$ determined all solutions of equation \eqref{shorey} with $K-k=2$, $k \geq 4$, $\ell \geq 3$. For papers with $K-k \geq 2, b_{\ell}=0$ see \cite{bs93} and \cite{dls18}. Hajdu, Papp and Tijdeman \cite{hpt} provided effective upper bounds for $\max(|x|,|y|,\ell)$ in \eqref{shorey} under the assumption that $K-k < cK^{2/3}$ for some explicit $c>0$. 

In the case when instead of omitting terms from an arithmetic progression we have an extra term we recall some results concerning so-called figurate numbers. The $x$-th figurate number with integer parameters $k,m$ is defined as $f_{k,m}(x)=\frac{x(x+1)\dots (x+k-2)((m-2)x+k+2-m)}{k!}$. Note that these numbers are generalizations of factorials, and have been intensively studied by many authors. Here, in relation with \eqref{shorey} we only mention that Hajdu and Varga \cite{hv} proved that the equation
\begin{equation}
\label{fig}
f_{k,m}(x)=b_0y^{\ell} + b_{\ell}
\end{equation}
with $k\geq 3$, $m\geq 4$, $b_0,b_\ell\in\Q$, $b_0\neq 0$ has only finitely many solutions $(x,y,\ell) \in \Z^3$ with $\ell\geq 2, |y| > 1$, unless $(k,\ell)=(3,2)$ or $(k,m,\ell) =(4,4,2),(4,6,2),(4,4,4)$. For many related results and history, see \cite{hv}.

\subsection{Both $f$ and $g$ have simple rational roots almost in arithmetic progressions.} 

In the literature many papers deal with special cases of the equation
\begin{equation}
\label{bst}
a_0x(x+d_1)\cdots (x+(k-1)d_1) = b_0y(y+d_2)\cdots (y+(\ell-1) d_2)
\end{equation}
where $k,\ell, a_0, b_0$ are integers with $1 <k \leq  \ell$, $a_0b_0 \neq 0$, and $d_1,d_2$ are positive integers with $d_1\neq d_2$ if $k=\ell$.

First the case $a_0=b_0=d_1=d_2=1$ attracted attention. In 1963 Mordell \cite{mor63} proved that for $(k,\ell) = (2,3)$ the only positive integer solutions are given by $(x,y) = (2,1)$ and $(14,5)$. In 1972 Boyd and Kisilevsky \cite{bk} proved that $(x,y)=(2,1), (4,2), (55,19)$ are the only positive integer solutions if $(k,\ell) = (3,4)$, while Hajdu and Pint\'er \cite{hpi} showed that the only positive integer solution for $(k,\ell) = (4,6)$ is $(7,2)$. Several results are covered by the theorem of Saradha and Shorey \cite{ss90} that the only solution with $\ell = 2k$ is given by $(k, \ell,x,y) = (3,6,8,1)$. They, together with Mignotte (see \cite{miso}) determined all solutions in case $\ell / k \in \{3,4,5,6\}$.

Saradha, Shorey and Tijdeman \cite{sst95} studied the cases $a_0=b_0=1$, $d_1=1$, $d_2>1$, $\ell / k$ is integral. All cases with $a_0=b_0=1$ were covered by Beukers, Shorey and Tijdeman \cite{bst}. They proved that equation \eqref{bst} admits only finitely many positive integral solutions $x,y$ except for the infinite class of solutions $x=y^2 + 3d_2y$ when $k=2$, $\ell = 4$ and $d_1=2d_2^2$. By a similar reasoning the restriction $a_0=b_0=1$ can be replaced by $\ell > 2$.

By taking $a_0 = \ell !$, $b_0 = k!$, $d_1=d_2$, $m=x+k-1$, $n=y+\ell -1$ in \eqref{bst} the question becomes which binomial coefficients are equal,
\begin{equation} \label{bin}
{m \choose k} = {n \choose \ell}.
\end{equation}
Without loss of generality we assume $1<k < \ell$, $k \leq m/2$, $\ell \leq n/2$. In 1966 Avanesov \cite{ava} provided all solutions to equation \eqref{bin} if $(k,\ell)=(2,3)$, and in 1963 Mordell \cite{mor63} did so for the case $(k,\ell) = (3,4)$. Stroeker and de Weger \cite{sdw} dealt with $(k,\ell) = (2,6),(2,8),(3,6),(4,6),(4,8)$ and Bugeaud, Mignotte, Siksek, Stoll and Tengely \cite{bmsst} solved the case $(k, \ell) = (2,5)$. Several pairs $(k, \ell)$ were treated by other authors. 
Gallegos-Ruiz, Katsipis, Tengely and Ulas \cite{gktu} completely solved the equations
$$ {m \choose k} = {n \choose \ell} + d, \ \ \  -3 \leq d \leq 3$$ for pairs $(k,\ell) = (2,3), (2,4), (2,6), (2,8), (3,4), (3,6), (4,6), (4,8).$ 
\noindent It follows from their work that for every integer $r$ the equation
$$(x+r)(x-r-1) = 2 {y \choose 5}$$ has two surprising large solutions.\footnote{Tengely tells us that the correct conjecture on page 434 of their paper reads that the only solutions in positive integers of the equation ${y \choose 2} = {x \choose 5} + 66$ are $(x,y) = (1,12), (2,12), (3,12), (4,12), (11,33), (28,444), (7935, 723632383), (7939, 724544908)$. Note that the equation $(y+11)(y-12) = 2 { x \choose 5}$ (i.e. $r=11$) has the same solutions.}
Surveys on (almost) equal binomial coefficients are Blokhuis, Brouwer, de Weger \cite{bbw}\footnote{In their list on page 2 the sporadic solution $n=78, k=2, m=15, \ell = 5$ is missing.} and Gallegos-Ruiz et al. \cite{gktu}.

A generalization concerns the equation
$$
f_{k,m}(x)=f_{\ell,n}(y)
$$
of figurate numbers (defined in the previous subsection). We only mention that Hajdu, Pint\'er, Tengely and Varga \cite{hptv} obtained various finiteness results concerning this equation. For history and further related results see \cite{hptv}.

\subsection{The roots of $f$ are simple and rational and $g(y)\in \Q[y]$.}
Consider the equation
\begin{equation}
\label{hajdu}
f(x):=(x+d_1d)\cdots(x+d_kd) = g(y)
\end{equation} 
in integers $x,y$ where  $d,k,K,d_1, d_2, \dots, d_k$ are integers with $0 \leq d_1 < d_2< \dots <d_k<K$ and $k>2$, $g(y) \in \Q[y]$ of degree $\ell \geq 2$. Kulkarni and Sury \cite{ks} proved that if $d=1$, $k=K$, $\ell>2$ and \eqref{hajdu} has infinitely many solutions, then either $g=f(G)$ for some $G(y) \in \Q[y]$, or $k$ is even and $g=\varphi(G)$ where
$$
\varphi(x) = \left(x-\left(\frac 12\right)^2\right) \left(x-\left(\frac 32\right)^2\right) \cdots \left(x-\left(\frac{k-1}{2}\right)^2\right)
$$
and $G(y) \in \Q[y]$ is a polynomial whose squarefree part has at most two roots, or $k=4$ and $g(y) = bv(y)^2 + 9/16$ where $b\in \Q$, $b\neq 0$ and $v$ is a linear polynomial with rational coefficients. They showed in particular that there are only finitely many such solutions $k,x,y$ if $g$ is irreducible. Hajdu, Papp and Tijdeman \cite{hpt} proved the finiteness of the number of solutions of \eqref{hajdu} under the assumption that $K-k \leq cK^{2/3}$ with $c$ an explicit constant, provided that $g$ does not belong to two explicitly given classes in which there can be infinitely many solutions. The latter two papers are based on a theorem of Bilu and Tichy \cite{bt}, which will also play an important role in our present study and is formulated in the next section. For other papers related to \eqref{hajdu} see \cite{bs93}, \cite{bbkpt}, \cite{rak03}, \cite{rak04}, \cite{stt}. For finiteness results when $f(x)$ in \eqref{hajdu} is replaced by $f_{k,m}(x)$ (related to figurate numbers), see \cite{hptv}, \cite{hv}, and the references there.

\subsection{Power values and equal values of products with terms coming from an interval.}
Finally, we recall some papers and results from the literature concerning products with terms coming from blocks of consecutive integers. These are related to our results in Section \ref{seceb}.

First we mention a result of Erd\H{o}s and Turk \cite{ert}. They (among others) studied the existence of terms from `short' intervals $I$ having a power product, and also the existence of two distinct sets of integers in $I$ with equal product. Roughly speaking, they proved that these properties never hold for `very short' intervals; that they hold in infinitely many cases, but also fail in infinitely many cases for `medium sized' intervals; and that they always hold if the size of $I$ is `large enough'. They gave precise formulas for the sizes in their paper \cite{ert}.

Another problem of somewhat similar flavor is due to Erd\H{o}s and Graham \cite{erg}, who asked when the product of two or more disjoint blocks of consecutive integers can be a power infinitely often. Ulas \cite{u} was the first to provide examples yielding a positive answer: he exhibited families of blocks of precisely four integers whose product gives perfect squares. Bauer and Bennett \cite{babe} described the `minimal examples' yielding perfect square products. For related results, see  \cite{ska}, \cite{tv} and the references there.

\section{The Bilu-Tichy theorem}
\label{secbt}

We say that a polynomial $f$ as in \eqref{fx} is symmetric, if there exists an $a\in\Q$ such that the set $\{a_1,\dots,a_k\}$ is symmetric around $a$.

We call polynomials $f, \tilde{f}\in\Q[x]$ similar if there exist $a,b\in\Q$, $a\neq 0$ such that $f(x) = \tilde{f}(ax+b)$. Notation $f \simeq \tilde{f}$. Obviously this induces an equivalence relation on $\Q[x]$. Observe that if $f$ has only simple rational roots, then $\tilde{f}$ has only simple rational roots too. In every equivalence class there are polynomials with sum of roots equal to $0$. Moreover, if the roots of $f$ are all rational, then there exists a similar polynomial $\tilde{f}(x) \in \Z[x]$ of which the roots are integers with sum $0$. If the polynomial equation $f(x)=g(y)$ has infinitely many solutions $(x,y) \in \Q^2$ with a bounded denominator and $f \simeq \tilde{f}$, $g \simeq \tilde{g}$, then the equation $\tilde{f}(x) = \tilde{g}(y)$ has also infinitely many solutions $(x,y) \in \mathbb{Q}^2$ with a bounded denominator. We call equations $f(x)=g(y)$ and $\tilde{f}(x) = \tilde{g}(y)$ with $f \simeq \tilde{f}, g \simeq \tilde{g}$ similar equations.

We call $f(x)\in \Q[x]$ decomposable over $\Q$ if there exist $G(x), H(x) \in \Q[x]$ with $\deg(G)>1$, $\deg(H) >1$ such that $f = G(H)$, and otherwise indecomposable. Since $\deg(f) = \deg(G)\cdot \deg(H)$, $f$ is indecomposable if $\deg(f)$ is prime.

Let $\delta$ be a non-zero rational number and $\mu$ be a positive integer. Then the $\mu$-th Dickson polynomial is defined by
$$
D_\mu(x,\delta):=\sum_{i=0}^{\lfloor\mu/2\rfloor}d_{\mu,i}x^{\mu-2i}\ \ \ \text{where}\ d_{\mu,i}=\frac{\mu}{\mu-i} \binom{\mu-i}{i}(-\delta)^i.
$$
For properties of Dickson polynomials see e.g. \cite{lmt}.

In this section we prove a variant of Theorem \ref{caseA}. In the proof the following result of Bilu and Tichy \cite{bt} on equation \eqref{maineq} is crucial. Here the polynomials $F,G\in\Q[x]$ form a standard pair over $\Q$ if either $(F(x),G(x))$ or $(G(x),F(x))$ appears in Table 1.

\begin{theorem}[Bilu, Tichy \cite{bt}, Theorem 1.1]
\label{lembt}
Let $f(x),g(x)\in\Q[x]$ be non-constant polynomials. Then the following two statements are equivalent.
\begin{enumerate}
	\item[(I)] The equation $f(x)=g(y)$ has infinitely many rational solutions $x,y$ with a bounded denominator.
	\item[(II)] We have $f=\varphi( F(\kappa))$ and $g=\varphi( G(\lambda))$, where $\kappa(x),\lambda(x)\in\Q[x]$ are linear polynomials, $\varphi(x)\in\Q[x]$, and $F(x),G(x)$ form a standard pair over $\Q$ such that the equation $F(x)=G(y)$ has infinitely many rational solutions with a bounded denominator. 
\end{enumerate}
\end{theorem}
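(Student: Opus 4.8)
The statement is an equivalence, so I would prove the two implications separately, starting with the easier direction (II)$\Rightarrow$(I). Given the decomposition $f=\varphi(F(\kappa))$ and $g=\varphi(G(\lambda))$ with $\kappa,\lambda$ linear and $(F,G)$ a standard pair for which $F(x)=G(y)$ already has infinitely many bounded-denominator solutions, the plan is simply to transport these solutions: if $(\xi,\eta)$ solves $F(x)=G(y)$, then $(\kappa^{-1}(\xi),\lambda^{-1}(\eta))$ solves $f(x)=g(y)$, since applying $\varphi$ to $F(\xi)=G(\eta)$ gives $f(\kappa^{-1}(\xi))=g(\lambda^{-1}(\eta))$, and the linear maps preserve the bounded denominator. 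It then suffices to verify, kind by kind through the finitely many families of standard pairs, that each genuinely carries infinitely many such solutions, which is a direct check (explicit polynomial parametrizations for the power and quadratic kinds and Pell-type recurrences for the Dickson kinds).

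The real content is in (I)$\Rightarrow$(II), and here the plan is to convert the arithmetic hypothesis into geometry by means of Siegel's theorem. First I would absorb the bounded-denominator condition into a finite set $S$ of places, so that the infinitely many rational solutions become $S$-integral points on the affine curve $C\colon f(x)-g(y)=0$. By Siegel's theorem on integral points \cite{sie}, in the $S$-integral form exploited by Davenport, Lewis and Schinzel \cite{dls}, a geometrically irreducible affine curve over a number field with infinitely many $S$-integral points must have geometric genus $0$ and at most two points at infinity. Consequently some geometrically irreducible component $C_0$ of $C$, defined over $\Q$, satisfies this genus-and-infinity condition, and the problem reduces to the purely algebraic task of classifying all $(f,g)$ for which $f(x)-g(y)$ has such a factor.

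For that classification I would first remove the largest common left composition factor, writing $f=\varphi\circ f_1$ and $g=\varphi\circ g_1$ with $\varphi$ of maximal degree---this is the $\varphi$ of (II)---so that afterwards $f_1$ and $g_1$ share no nontrivial left factor, by Ritt's decomposition theory. I would then analyse the fibre product of the polynomial covers $f_1,g_1\colon\mathbb{P}^1\to\mathbb{P}^1$, whose components over $\Q$ correspond, following Fried, to orbits of the product of the monodromy groups acting on the fibres; the genus-$0$ and points-at-infinity data of $C_0$ then translate, through the Riemann--Hurwitz formula applied to the Galois closure, into rigid constraints on the ramification of $f_1$ and $g_1$ over their common critical values and over infinity. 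Using the classification of the monodromy groups that can occur for indecomposable polynomial covers---essentially cyclic, dihedral, and a short list of exceptional groups---I would match each admissible ramification configuration to a standard pair: the cyclic (pure-power) case gives the first and second kinds, the dihedral case gives the Dickson pairs of the third and fourth kinds with the coprimality and $\gcd=2$ conditions falling out of the ramification bookkeeping, and the sporadic configuration gives the fifth kind. Unwinding the linear normalizations into $\kappa$ and $\lambda$ reconstitutes the exact form asserted in (II).

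The hard part is this last step: showing the classification is simultaneously complete and exact, so that precisely the five families are forced and nothing extraneous survives. This rests on Fried's analysis of when a separated-variable polynomial $f(x)-g(y)$ is reducible together with a delicate genus computation for the Galois closure of the covers. The arithmetic constraint forcing $m\in\{1,2,3,4,6\}$---equivalently, that Euler's totient of $m$ is at most $2$---enters exactly here, when one bounds the ramification indices compatible with genus $0$, and it is the reason these integers govern the Dickson cases. Controlling the dihedral/Dickson branch and confirming the isolated sporadic pair is where I expect the argument to be most technical and where the bulk of the effort would go.
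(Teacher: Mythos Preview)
The paper does not prove this theorem at all: it is quoted verbatim from Bilu and Tichy \cite{bt} (as Theorem~1.1 there) and used as a black box. So there is no ``paper's own proof'' to compare against; the authors simply invoke the result and then work with its consequences (Lemma~\ref{btcons}, Lemma~\ref{lemdickson}, etc.).

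Your sketch of how the Bilu--Tichy theorem is actually proved in \cite{bt} is broadly on the right track---Siegel's theorem reduces (I) to a genus/points-at-infinity constraint, and then one classifies the admissible $(f_1,g_1)$ via Ritt/Fried decomposition and monodromy. However, you have grafted onto it something that does not belong. The restriction $m\in\{1,2,3,4,6\}$ (equivalently $\varphi(m)\le 2$) is \emph{not} part of the Bilu--Tichy classification: standard pairs of the third and fourth kinds allow Dickson polynomials $D_\mu, D_\nu$ of arbitrary degrees subject only to $\gcd(\mu,\nu)=1$ or $2$. The condition $m\in\{1,2,3,4,6\}$ arises only in the present paper's Theorem~\ref{caseA}, from the \emph{additional} hypothesis that $f$ has only simple rational roots, and it is obtained later via Lemma~\ref{lemdickson} by factoring $D_N(x,b)+u$ over quadratic fields---not from any genus bound in the proof of Theorem~\ref{lembt}. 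So that paragraph of your proposal is misplaced and, as a claim about the Bilu--Tichy proof, incorrect.
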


\begin{table}
\begin{center}
\begin{tabular}{|c|c|c|}
\hline
Kind & Standard pair (unordered)& Parameter restrictions \\
\hline
First & $(x^q ,\alpha x^p v(x)^q)$ & $0\leq p<q$, $(p,q)=1$,\\
&& $p+\deg(v)>0$ \\
\hline
Second & $(x^2 , (\alpha x^2+\beta)v(x)^2)$ & - \\
\hline
Third & $(D_\mu(x,\alpha^\nu),D_\nu(x,\alpha^\mu))$ & $\gcd(\mu,\nu)=1$ \\
\hline
Fourth & $(\alpha^{-\mu/2}D_\mu(x,\alpha),-\beta^{-\nu/2}D_\nu(x,\beta)$) & $\gcd(\mu,\nu)=2$ \\
\hline
Fifth & $((\alpha x^2-1)^3,3x^4-4x^3)$ & - \\
\hline
\end{tabular}
\end{center}
\caption{{\small Standard pairs. Here $\alpha,\beta$ are non-zero rational numbers, $\mu,\nu,q$ are positive integers, $p$ is a non-negative integer, $v(x)\in \Q[x]$ is a non-zero, but possibly constant polynomial.}}
\end{table}

Observe that $F(\kappa) \simeq F$ and $G(\lambda) \simeq G$. The Bilu-Tichy theorem implies that if (I) holds then the equation $F(\kappa(x)) = G((\lambda(y))$ has infinitely many rational solutions with a bounded denominator. The converse is obvious. 

In Theorem \ref{caseA} one may read $m=\deg(F), n=\deg(G), s=\deg(\varphi)$.
\vskip.2cm
An interesting result in connection with Theorem \ref{lembt} is due to Avanzi and Zannier \cite{az}. Namely, Theorem 1 of \cite{az} implies that if the equation $f(x)=g(y)$ with $f(x),g(x)\in \Q[x]$, $\gcd(k,\ell)=1$ and $k,\ell >6$ has infinitely many rational solutions, then infinitely many of them have a bounded denominator. (Cf. Bilu's MathSciNet review MR1845348 of that paper.)

\vskip.2cm

We start with investigating when the equation 
\begin{equation}
\label{FGeq}
F(x)=G(y)
\end{equation}
for standard pairs $(F,G)$ has infinitely many solutions $(x,y)$ with a bounded denominator in our settings. Lemma \ref{btcons} shows that condition \eqref{fx} restricts the possibilities.

\begin{lemma}
\label{btcons}
Suppose $f$ is of the form \eqref{fx} and equation \eqref{maineq} has infinitely many rational solutions with a bounded denominator. Let $(F,G)$ be a corresponding standard pair. Then one of the following cases holds:
\begin{itemize}
\item[1)] $(F,G)$ is of the first or second kind, $\min(\deg(F),\deg(G))\leq 2$,
\item[2)] $(F,G)$ is of the third or fourth kind.
\end{itemize}
\end{lemma}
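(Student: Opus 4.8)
The plan is to invoke the Bilu--Tichy theorem (Theorem \ref{lembt}) to write $f=\varphi(F(\kappa))$ and $g=\varphi(G(\lambda))$ for some standard pair $(F,G)$ over $\Q$ and linear $\kappa,\lambda$, and then to exploit the hypothesis that $f$ has only simple rational roots. The key structural observation is that $f=\varphi\circ F\circ\kappa$, so since $\kappa$ is linear and invertible, the multiset of roots of $f$ is the image under $\kappa^{-1}$ of the multiset of roots of $\varphi\circ F$. Thus $f$ has only simple rational roots if and only if $\varphi(F(x))$ does. I would then analyze each of the five kinds of standard pairs in turn, asking: for which standard pairs $(F,G)$ can the composite $\varphi\circ F$ have all simple rational roots, given the freedom to choose $\varphi\in\Q[x]$?

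The first goal is to rule out the fifth kind entirely. For a fifth-kind pair we have, up to the ordering, $F(x)=(\alpha x^2-1)^3$ or $F(x)=3x^4-4x^3$ (and correspondingly $f$ could arise from either component). First I would examine the case where the $f$-side corresponds to $(\alpha x^2-1)^3$: then $\varphi(F(x))=\varphi((\alpha x^2-1)^3)$, and because $(\alpha x^2-1)^3$ takes each generic value with multiplicity coming from the cube, the roots of the composite cannot all be simple --- more carefully, one checks that $\varphi((\alpha x^2-1)^3)$ always has a repeated root (for instance because $(\alpha x^2-1)^3$ has the critical value structure forcing multiplicities). The case $F(x)=3x^4-4x^3$ is handled similarly: this polynomial has a double critical point (its derivative $12x^3-12x^2=12x^2(x-1)$ has a double zero at $x=0$), which forces $\varphi(3x^4-4x^3)$ to have a repeated root regardless of $\varphi$. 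Hence neither component of a fifth-kind pair can yield an $f$ with only simple roots, so the fifth kind is excluded; this is exactly why the excerpt remarks that the fifth kind is excluded.

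Next I would treat the first and second kinds together to extract the degree restriction $\min(\deg(F),\deg(G))\leq 2$. In a first-kind pair one component is $x^q$ and in a second-kind pair one component is $x^2$, so $\min(\deg F,\deg G)\le 2$ holds automatically as soon as the $f$-side is \emph{not} the distinguished component $x^q$ (resp.\ $x^2$) --- the issue is to show the $f$-side cannot be the high-degree component $\alpha x^p v(x)^q$ (resp.\ $(\alpha x^2+\beta)v(x)^2$) with that component having degree exceeding $2$ while still producing simple rational roots after composing with $\varphi$. The point is that $\alpha x^p v(x)^q$ with $q\ge 2$ and $\deg(v)>0$ has a root of multiplicity $q\ge 2$ (the roots of $v$), so $\varphi$ applied to it (and more basically $F$ itself, before $\varphi$, already shares roots with $f$ via the chain rule on multiplicities) cannot have all simple roots unless $\deg(v)=0$, and a short multiplicity count shows the surviving possibilities all satisfy the claimed degree bound; the second kind is analogous with the square $v(x)^2$. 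In every remaining possibility the component on the $f$-side is forced to have degree at most $2$, giving case 1). The third and fourth kinds are collected into case 2) with no further restriction, since Dickson polynomials can have simple rational roots.

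The main obstacle I anticipate is the bookkeeping of multiplicities under the composition $\varphi\circ F$: although $\varphi$ is an arbitrary rational polynomial and can in principle split further, one must argue carefully that a repeated factor of $F$ (such as $v(x)^q$ with $q\ge 2$, or the triple power in the fifth kind) cannot be "undone" by $\varphi$. The clean way to handle this is to track multiplicities at the critical points of $F$: if $F$ has a critical value $c=F(x_0)$ at a point $x_0$ where $F'(x_0)=0$ with multiplicity $e$, then for any root $\beta$ of $\varphi$ equal to $c$ the composite $\varphi(F(x))$ acquires a root at $x_0$ of multiplicity at least $e+1>1$; and even if $c$ is not a root of $\varphi$, the ramification still forces non-simplicity through the nearby preimages. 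Making this ramification-of-composition argument precise, uniformly across all five kinds, is the technical heart of the proof, after which the case distinction itself is routine.
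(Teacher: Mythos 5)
Your top-level plan (invoke Bilu--Tichy, reduce to $f=\varphi(F)$ up to a linear change of variable, then ask for which standard pairs $\varphi\circ F$ can have only simple rational roots) is the same as the paper's, but the engine you propose for the case analysis is wrong. Your central claim --- that a critical point of $F$ forces $\varphi(F)$ to have a repeated root \emph{regardless} of $\varphi$, and in particular that ``even if $c$ is not a root of $\varphi$, the ramification still forces non-simplicity through the nearby preimages'' --- is false. If $F(x_0)=\beta$ where $\beta$ is a root of $\varphi$ of multiplicity $m$ and $F-\beta$ vanishes to order $e$ at $x_0$, then $x_0$ is a root of $\varphi(F)$ of multiplicity exactly $me$; hence $\varphi(F)$ is squarefree as soon as $\varphi$ is squarefree and its roots avoid the critical values of $F$. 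Concretely, $\varphi(x)=x-1$, $F(x)=3x^4-4x^3$ gives a squarefree composite, so your exclusion of the fifth kind collapses. Worse, your first-kind assertion (``cannot have all simple roots unless $\deg(v)=0$'') is contradicted by the paper's own Example 5.6, where $F(x)=x(x-1729^2)^2$ (so $q=2$, $\deg(v)>0$) and yet $\varphi(F)$ has only simple rational roots; your principle would wrongly rule out configurations the paper exhibits as genuinely occurring.

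The missing idea is that you must use the \emph{realness} of the roots, not simplicity alone: since $f$ has $\deg(f)$ distinct real (indeed rational) roots, Rolle's theorem forces $f'$ to have only simple real roots, and by the chain rule $f'=\varphi'(F)\,F'$, so $F'$ itself must have only simple real roots. This one-line mechanism is the paper's entire proof, and it is exactly what kills the problematic cases: both fifth-kind polynomials have $F'$ with a multiple root ($12x^2(x-1)$, respectively $6\alpha x(\alpha x^2-1)^2$), and in a first-kind pair with $\min(\deg F,\deg G)\geq 3$ one has $q\geq 3$, so whichever component plays the role of $F$ --- $x^q$ with $F'=qx^{q-1}$, or $\alpha x^pv(x)^q$ with $v^{q-1}\mid F'$, or $\deg(v)=0$ forcing $p\geq 3$ --- again $F'$ has a multiple root, a contradiction. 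Without realness the fifth kind cannot be excluded at all: choosing $\varphi$ to avoid the critical values of $F$ yields a composite all of whose roots are simple but not all real, so any purely multiplicity-theoretic bookkeeping of the kind you propose must fail; the rationality hypothesis enters precisely through Rolle, which your proposal never uses.
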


\begin{proof} 
Without loss of generality we may assume $f = \varphi(F)$, $g = \varphi(G)$. Since $f$ has only simple rational roots, $f' = \varphi'(F)F'$ has only simple real roots. Hence $F'$ has only simple real roots. If $(F,G)$ is of the fifth kind, then $F'$ has a multiple root and so the fifth kind is excluded. Therefore, if we are not in case 2), we have a pair $(F,G)$ of the first or second kind. By 1) we may assume that $\deg(F)\geq 3$ and $\deg(G) \geq 3$. Then $(F,G)$ is not of the second kind. If $(F,G)$ is of the first kind, then $q\geq 3$ and if $\deg(v)=0$ then $p\geq 3$. However, then $F'$ has a multiple root, which is not the case. 
\end{proof}

\vskip.2cm 

\noindent
{\bf Remark 3.1.} It follows that if $(F,G)$ is a standard pair of the first or second kind, then $\deg(f)\mid 2\deg(g)$ or $\deg(g)\mid 2\deg(f)$.

\vskip.2cm 

\noindent {\bf Remark 3.2.} In Examples 1.1, 1.2, 1.3 we may take
$$
F(x)= x^2,~~ G(y) = y(y-7)^2,~~ \varphi(x) = x-36,
$$
$$
F(x) = x^2,~~ G(y) = 2y^2-1,~~ \varphi(x) = (x-1)(x-49),
$$
$$
F(x) = D_3(x, 13^4),~~ G(y) = D_4(y, 13^3),~~ \varphi(x) = x-1111682,
$$
respectively.

\section{pte$_k$ polynomial sets}
\label{secpte}

Let $f(x) \in \Q[x]$ with only simple rational zeros be decomposable over $\Q$ as $\varphi(F(x))$. Let
$$
\varphi(x) = p_0(x-p_1)(x-p_2) \cdots(x-p_s)
$$
with $s>0$, $p_0\in\Q$ $(p_0\neq 0)$ and $p_i\in\C$ $(i=1,\dots,s)$. Then
$$
f(x) = p_0(F(x)-p_1)(F(x)-p_2) \cdots (F(x)-p_s).
$$
From this, we see that $p_i\in\Q$ $(i=1,\dots,s)$, and that these numbers are distinct. Further, writing $F_i(x) = F(x)-p_i$ for $i=1,2, \dots, s$ we obtain that $F_1(x), F_2(x), \dots, F_s(x) \in \Q[x]$ are such that $F_i(x)/F_j(x) \notin \Q$, $F_i(x) - F_j(x) \in \Q$ for $1 \leq i < j \leq s$ and, moreover, $F_i(x)$ has only simple rational roots for $1 \leq i \leq s$. These polynomials have the same degree, $m$ say. It follows that there are rationals $r_1, r_2,  \dots, r_m$ independent of $i$ such that $F_i(x) = r_mx^m + r_{m-1}x^{m-1} + \ldots +r_1x +f_i$ for all $i$ with $f_1, f_2, \dots, f_s \in \Q$ distinct. We call $f$ a $\pte_m$-polynomial, $\{F_1, F_2, \dots, F_s\}$ a $\pte_m$ set and $F$ a $\pte_s$ component of $f$. Of course $\deg(f)=ms$. Note that every polynomial with only rational roots is a $\pte_1$ component of itself. On the other hand, $\pte_m$ polynomials of degree $>m>1$ are decomposable.

If $\{F_1, F_2, \dots, F_s\}$ is a $\pte_{m}$ set, then the first $m-1$ symmetric polynomials of the roots of $F_i(x)$ are independent of $i$. By the formulas of Newton-Girard we obtain that the sum of the $j$-th powers of the roots of $F_i$ are independent of $i$ for $j=1,2, \dots, m-1$. In case all the roots are rational, the union of the sets of roots is called an ideal Prouhet-Tarry-Escott set. Ideal Prouhet-Tarry-Escott pairs (i.e. corresponding to the case $s=2$) are known for $2 \leq m \leq 10$ and for $m=12$. For general information on the PTE-problem we refer to \cite{rava}. In this section we shall show that for $m \in \{3,4,6\}$ we can construct arbitrarily large integral $\pte_m$ sets, that is $s$ sets of $m$ integers having the same sums of $j$-th powers for $1\leq j\leq m-1$, with $s$ arbitrary. In our construction $r_2=0$ for $m=3$ and for $m=4,6$ we have $p_{\kappa + m/2} = - p_\kappa$ for $\kappa=1,2, \dots, m/2$ and therefore $r_i = 0$ for $i$ odd. Note that $\pte_m$ sets turn into $\pte_m$ sets under linear transformations. Therefore, if there exists a $\pte_m$ set of $m$ rationals, there exists a similar $\pte_m$ set of $m$ integers. (In the literature, see \cite{rava}, $\pte_m$ sets of integers are considered. However, in view of the last remark, one can work with rational $\pte_m$ sets as well.)

\vskip.3cm

\noindent
{\bf Case $m = 4$.}
Choosing $F$ as an even polynomial, it suffices to prove that for any $s$ there are $s$ monic polynomials $F_i(x)$ of degree $4$ with distinct integer roots $\alpha_{i1}$, $\alpha_{i2}$, $-\alpha_{i1}$, $-\alpha_{i2}$ such that $2\alpha_{i1}^2 + 2\alpha_{i2}^2$ has the same value, independent of $i$. Let $M$ be the product of $\rho$ distinct primes of the form $\equiv 1\pmod{4}$. Then the number of representations of $M$ as    
$\alpha_1^2 + \alpha_2^2$ with $\alpha_1, \alpha_2 \in \mathbb{Z}$, $\alpha_1 > \alpha_2 > 0$, $\gcd(\alpha_1, \alpha_2) = 1$ equals $2^{\rho-1}$ according to Theorem 7.5 of \cite{lev}. Thus for every $\rho$ we can construct $2^{\rho - 1}$ distinct primitive polynomials $F_i$ of degree $4$ differing only in their constant terms.

\vskip.2cm

\noindent
{\bf Example 4.1.} We have
$$
5\cdot 13\cdot 17 = 1105 = x^2+y^2 ~~{\rm for} ~~ (x,y)= (33,4),(32,9), (31,12), (24,23).
$$
Hence the polynomial $P(x) = x^4 - 1105x^2$ has simple rational roots when $17424$, $82944$, $138384$ or $304704$ is added, since the corresponding polynomials equal
{\small $$(x^2-33^2)(x^2-4^2), (x^2-32^2)(x^2-9^2), (x^2-31^2)(x^2-12^2), (x^2-24^2)(x^2-23^2),$$}
respectively.

\vskip.3cm

\noindent
{\bf Case $m=6$.} It suffices to prove that for any $s$ there are $s$ monic polynomials $F_i(x)$ of degree $6$ with distinct integer roots $\pm \alpha_{i1}$, $\pm \alpha_{i2}$, $\pm \alpha_{i3}$ such that both $\sum_{\kappa = 1}^3 \alpha_{i\kappa}^2$ and $\sum_{\kappa = 1}^3 \alpha_{i\kappa}^4$ are independent of $i$. Let $M$ be the product of $\rho$ distinct primes of the form $\equiv 1\pmod{6}$. The number of representations of $M$ as $x^2 + xy + y^2$ with coprime integers $x,y$ with $x>y>0$ equals $2^{\rho -1}$ (see \cite{dic} par. 48, item 4). Suppose $(x_i,y_i)$ is such a pair. Choose as roots of $F_i$ the six integers $\pm x_i, \pm y_i,\pm(x_i+y_i)$ for $i=1, 2, \dots, 2^{\rho -1}$. We have $x_i^2 + y_i^2 + (x_i+y_i)^2 = 2M$ and $x_i^4 + y_i^4 + (x_i+y_i)^4 = M^2$ (cf. Choudhry \cite{chr}, Sec. 4). Thus both the sum of the squares and the sum of the biquadrates of the roots of $F_i$ are independent of $i$. The formulas of Newton-Girard imply $F_i(x) = x^6 - 2Mx^4 + M^2x^2-f_i$ for distinct integers $f_i$ and all $i$. We conclude that for every $\rho$ we can construct $2^{\rho - 1}$ distinct primitive polynomials of degree $6$ with integer roots, which differ by their constant terms only.

\vskip.2cm

\noindent {\bf Example 4.2.} We have
$$
7\cdot 13\cdot 19 = 1729 = x^2+xy+y^2 ~~{\rm for}~~(x,y) = (40,3), (37,8), (32,15), (25,23).
$$
Hence the polynomial $P(x)=x^6 - 2\cdot 1729x^4 + 1729^2x^2$ has simple integer roots when $26625600$, $177422400$, $508953600$ or $761760000$ is subtracted, since the corresponding polynomials equal
$$
(x^2-3^2)(x^2-40^2)(x^2-43^2),~~ (x^2-8^2)(x^2-37^2)(x^2-45^2),
$$
$$
(x^2-15^2)(x^2-32^2)(x^2-47^2),~~ (x^2-23^2)(x^2-25^2)(x^2-48^2),
$$
respectively.

\vskip.3cm

\noindent {\bf Case $m = 3$.} Take ${\rho}$ distinct primes of the form $\equiv 1\pmod{6}$. Then as above we can construct $s := 2^{\rho-1}$ distinct pairs $(x_i,y_i) \in \mathbb{Z}^2$ with $ x_i>y_i>0$ such that the sum $x_i^2 + x_iy_i+y_i^2$ is equal to the product $M$ of these primes. Consider the triples
$$
(M+x_i(y_i-x_i), -M+y_i(y_i-x_i), x_i^2-y_i^2)\ \ \ (i=1,2, \dots, s).
$$
Each triple has sum $0$ and sum of squares
$$
2M^2 -2M(x_i^2-2x_iy_i+y_i^2) + 2x_i^4-2x_i^3y_i-2x_iy_i^3+2y_i^4.
$$
Using that $M= x_i^2 + x_iy_i +  y_i^2$, we obtain that the sums of squares equal $2M^2$. Of course, this is also true for the opposite triples
$$
-(M+x_i(y_i-x_i)),\ \ M-y_i(y_i-x_i),\ \ y_i^2-x_i^2\ \ \ (i=1,2, \dots, s).
$$
Thus the polynomial $x^3-M^2x$ has simple integer roots if $0$ or
$$
(M+x_i(y_i-x_i))(M-y_i(y_i-x_i))(x_i^2-y_i^2)
$$
is added or subtracted, for $i=1,2, \dots, s$.

\vskip.2cm

\noindent {\bf Example 4.3.}  We start again from the pairs
$$
(x,y) = (40,3),\ \ (37,8),\ \ (32,15), \ \ (25,23)
$$
from Example 4.2 which all satisfy $M=x^2 + xy + y^2 = 1729$. According to the above rules they lead to the nine triples
$$
(-1729, 0, 1729),\ (\pm 1840, \mp 249, \mp 1591),\ (\pm 1961, \mp 656, \mp 1305),
$$
$$
(\pm 1984, \mp 1185, \mp 799),\ (\pm 1775,  \mp 96, \mp 1679),
$$
which all have sum $0$ and sum of squares $2\cdot 1729^2 = 5978882$. Thus the polynomial $P(x) = x^3 - 1729^2x$ has simple integer roots when one from
$$
0,\ \ \ \pm 728932560,\ \ \ \pm 1678772880,\ \ \ \pm 1878480960,\ \ \ \pm 286101600
$$
is added. Namely, we get the polynomials $(x-1729)x(x+1729)$,
$$
(x\pm 1840)(x\mp 249)(x\mp 1591),\ (x\pm 1961)(x\mp 656)(x\mp 1305),
$$
$$
(x\pm 1984)(x\mp 1185)(x\mp 799),\ (x\pm 1775)(x\mp 96)(x\mp 1679),\ 
$$
respectively.

\section{Standard pairs of the first or second kind}
\label{sec12}

In this section we return to the original problem on equation \eqref{maineq} subject to \eqref{fx} and show by the help of examples that all cases of the first or second kind which are not excluded may indeed occur. Suppose the equation $f(x)=g(y)$ with $f(x),g(x) \in \Q[x]$ has infinitely many solutions $(x,y)\in \Q^2$ with a bounded denominator. According to Theorem \ref{lembt} we have $f=\varphi( F(\kappa))$ and $g=\varphi( G(\lambda))$, where $\kappa(x),\lambda(x)\in\Q[x]$ are linear polynomials, $\varphi(x)\in\Q[x]$, and $F(x),G(x)$ form a standard pair over $\Q$ such that the equation $F(x)=G(y)$ has infinitely many rational solutions with a bounded denominator. In the sequel we suppose that $f=\varphi( F)$ and $g=\varphi(G)$. The results then extend to all equations similar to the equation $\varphi(F(x))=\varphi(G(y))$, in particular to the original equation $f(x)=g(y)$.

Let $\varphi(x) = p_0(x-p_1) \cdots (x-p_s)$ with $p_0\in\Q$ ($p_0\neq 0$), $p_1,\dots,p_s\in\C$. Then $f(x) = p_0(F(x)-p_1) \cdots (F(x)-p_s)$. Since $f$ has only simple rational roots, $p_i$ is in fact rational, $F(x)-p_i$ has only simple rational roots and is a $\pte_s$ component of $f$ for $i=1,\dots, s$. We assume that $(F,G)$ is a standard pair of the first or second kind and consider successively the cases $\deg(F)=1$, $\deg(F)=2$ and $\deg(F)>2$. As we shall see, in each case $\deg(\varphi)$ can attain any positive integer value, hence $\deg(f),\deg(g)$ can each be arbitrarily large. 

\vskip.2cm

\noindent {\bf Case $\deg(F)=1$.} The standard pair is of the first kind and we may assume that $F(x)=x$. Then $f = \varphi$. Hence for every $X \in \Z$ equation $F(x) = G(y)$ has as solution $(x,y) = (G(X),X)$. Thus equation $f(x)=g(y)$ has also solution $(x,y) = (G(X),X)$ for every $X\in \Z$. Here the choice of $G(y)\in\Q[y]$ is free, $\deg(f)\mid\deg(g)$ and $\deg(f)$, $\deg(g)$ can be arbitrarily large. 

\vskip.2cm

\noindent {\bf Example 5.1.}
For every set of nonzero rationals $\{a_1,a_2,\dots, a_k\}$ the equation
$$
(x-a_1)(x-a_2)\cdots (x-a_k)=(G(y)-a_1) (G(y)-a_2)\cdots (G(y)-a_k),
$$
for $G(y)\in\Q[y]$ with $\deg(G)\geq 1$ arbitrary, has solutions $(x,y) = (G(X),X)$ $(X\in \Z)$. Here $\varphi(x)=f(x)$, $F(x)=x$. Writing $n=\deg(G)$, we have $\deg(f)=k\mid nk=\deg(g)$, where $k$ and $n$ can be arbitrary.

\noindent
{\bf Example 5.2.} We start out from two triples from Example 4.3,
$(1840, -249, -1591)$ and $ (1961, -656,-1305) $
both having sum zero and equal sums of squares. Let
\begin{equation}
\label{negyzetes}
f(x)=(x^2-1840^2)(x^2-249^2)(x^2-1591^2)(x^2-1961^2)(x^2-656^2)(x^2-1305^2).
\end{equation}
Since $v(x)\pm 728932560$ and $v(x)\pm 1678772880$ with $v(x)=x^3-1729^2 x$ are given by
$$
(x\pm 1840)(x\mp 249)(x\mp 1591),\ (x\pm 1961)(x\mp 656)(x\mp 1305),
$$
respectively, we see that
$$
f(x)=(v(x)^2-728932560^2)(v(x)^2-1678772880^2).
$$
Hence, putting $F(x)=v(x), G(y)=y$,
$$
g(x) = \varphi(x)=(x^2-728932560^2)(x^2-1678772880^2),
$$
we obtain
that the equation $f(x)=g(y)$ has infinitely many integral solutions, given by $(x,y)=(X,v(X))$ $(X\in\Z)$.
Here both $f$ and $g$ have only simple rational roots.

\vskip.2cm

\noindent {\bf Case $\deg(F)=2.$} Then either $F(x)=x^2$ or $F(x)=\alpha x^2+\beta x+\gamma$, $G(y)=y^q$ with $\alpha\neq 0$, $q \in \Z_{>0}$. In the latter case we first use that $F(x) \simeq x^2+c$ for some $c \in \Q$. Here $p=0$, $q=1$, $\deg(v)=2$ if $(F,G)$ is of the first kind and $\deg{v}=0$ if $(F,G)$ is of the second kind. Subsequently we replace $\varphi(x)$ by $\varphi(x-c)$ by which we get $F(x)=x^2, G(y) = y^q-c$. Thus we may choose $F(x) = x^2$ in both cases. 

We obtain that $f(x)$ is of the form $$\varphi(F(x))= p_0(x^2-p_1) \cdots (x^2-p_s)$$ has only simple rational roots. It follows that $p_1, p_2, \dots, p_s$ are squares of distinct rational numbers and that the roots $\pm b_1, \pm b_2, \dots, \pm b_s$ of $f$ are symmetric around $0$. Furthermore, $g(y) = p_0(G(y)-b_1^2) \cdots (G(y)-b_s^2)$. By Theorem \ref{lembt} the equation $x^2=G(y)$ has to have infinitely many rational solutions $x,y$ with a bounded denominator. Let $X_i,Y_i$ $(i=1,2,\dots)$ be such solutions. By the main result of  LeVeque \cite{lev2} (for the effective version see Brindza \cite{bri}) we obtain that the polynomial $G$ can have at most two roots of odd multiplicities. It follows that the equation $f(x)=g(y)$ has infinitely many rational solutions $(x,y)=(X_i,Y_i)$ $(i=1,2,\dots)$ with a bounded denominator. Writing $n=\deg(G)$, we have $\deg(f)=2s\mid 2ns =2\deg(g)$. In this case $s$ and $n$ can be arbitrary, and hence $\deg(f)$, $\deg(g)$ may be arbitrarily large.

\vskip.2cm

\noindent {\bf Example 5.3.}  Let
$$
F(x) = x^2,\ G(y) = yv^2(y),\ \varphi(x) = (x-b_1^2)\cdots (x-b_s^2)
$$
for some $v(y)\in\Q[y]$ and distinct positive rationals $b_1, b_2, \dots, b_s$. Then
$$
f(x)=(x-b_1)(x+b_1)\cdots(x-b_s)(x+b_s), g(x)=(G(y)-b_1^2)\cdots (G(y)-b_s^2),
$$
and $f(x)=g(y)$ has solutions $(Xv(X^2), X^2)$ for every $X \in \Z$.

\vskip.2cm

\noindent {\bf Example 5.4.}  Let
$$
F(x) = x^2,\ G(y) =(2y^2-1)v^2(y),\ \varphi(x) = (x-b_1^2)\cdots (x-b_s^2)
$$
for some $v(y)\in\Q[y]$ and distinct positive rationals $b_1, b_2, \dots, b_s$. Let $(X_i)_{i=1}^{\infty}$ be distinct integers such that $2Y_i^2 -1 =X_i^2$ for integers $Y_i$. Then
$$
f(x)=(x-b_1)(x+b_1)\cdots(x-b_s)(x+b_s), g(x)=(G(y)-b_1^2)\cdots (G(y)-b_s^2),
$$
and $f(x)=g(y)$ has solutions $(X_iv(Y_i), Y_i)$ for $i=1,2, \dots$.

\vskip.2cm 

\noindent Note that Examples 5.3 and 5.4 are generalizations of Examples 1.1 and 1.2, respectively. See also Remark 3.2.

%
%
%
%
\vskip.2cm
\noindent {\bf Case $\deg(F)>2$.} Here either $F(x) = x^q$ for some $q>2$ or $G(x)=x^q$ for some positive integer $q$.

If $F(x)=x^q$, then $f(x) = p_0(x^q-p_1)(x^q-p_2) \cdots (x^q-p_s)$ has simple rational roots which implies $q\leq 2$, but since $\deg(F)>2$ this is not possible. If $G(x)=x^q$, then from Table 1 we see that either $F(x)=\alpha x^p v(x)^q$ with $0\leq p<q$, $(p,q)=1$ (if $(F,G)$ is of the first kind), or $q=2$ and $F(x)=(\alpha x^2+\beta) v(x)^2$ (if $(F,G)$ is of the second kind). Since $f$ has simple rational roots, $f' = \varphi'(F)F'$ has only simple real roots and therefore $F'$ has only simple real roots. So $q \leq 2$ and in view of $\deg(F)>2$, we have only the following possibilities:
\begin{itemize}
\item[a)] $G(x)=x$ and $F(x) = \alpha v(x)$ has only simple rational roots.
\item[b)] $G(x)=x^2$ and $F(x)$ is $\alpha x v(x)^2$ or $(\alpha x^2+\beta)v(x)^2$.
\end{itemize}
In any case, $F$ is a $\pte_s$ component of $f$. Observe in particular that in case a) we have $\varphi=g$, so 
that it also suffices that $F$ is a $\pte_s$ component of $f$, thus the roots of $f$ form an ideal $\pte_s$ set.
In case a) we clearly have $\deg(g)\mid\deg(f)$, while in case b), obviously $\deg(g)\mid 2\deg(f)$ holds. From the examples it will be clear that the degree of $\varphi$ can be arbitrary, hence the degrees of $f,g$ can be arbitrarily large.

\vskip.2cm

\noindent {\bf Example 5.5.} First we give an example for a). We start from two triples from Example 4.3,
$$
(-1729, 0, 1729),\ (1840, -249, -1591),
$$
both having sum $0$ and sum of squares $2\cdot 1729^2$. So letting
$$
f(x)=(x+1729)x(x-1729)(x-1840)(x+249)(x+1591)
$$
and $g(y)=y(y-728932560)$, we have 
$
\varphi=g, F(x)=x^3-1729^2x, \\G(y)=y.
$
The equation $f(x)=g(y)$ has solution $(x,y)=(X,F(X))$ for every $X\in\Z$. Also $g$ has only simple integer roots.

\vskip.2cm 

In case b), we give examples to show that both possibilities for the choice of $F(x)$ are possible. Recall that in both cases we have $G(y)=y^2$.

\vskip.2cm

\noindent
{\bf Example 5.6.} $F(x)=\alpha x v(x)^2$. Let
$$
f(x)=(x-249^2)(x-1591^2)(x-1840^2)(x-656^2)(x-1305^2)(x-1961^2).
$$
Observe that we simply wrote $x$ in place of $x^2$ in \eqref{negyzetes}. Then by
$$
(x-1840^2)(x-249^2)(x-1591^2)=x(x-1729^2)^2-728932560^2,
$$
$$
(x^2-1961^2)(x-656^2)(x-1305^2)=x(x-1729^2)^2-1678772880^2
$$
we obtain that setting
$$
F(x)=x(x-1729^2)^2,\ \ \ \varphi(x)=(x-728932560^2)(x-1678772880^2),
$$
and
$$
g(y)=(y^2-728932560^2)(y^2-1678772880^2),
$$
the equation $f(x)=g(y)$ has infinitely many solutions given by $(x,y)=(X^2,X(X^2-1729^2))$ $(X\in\Z)$. Again $g$ has only simple rational roots.

\vskip.2cm

\noindent
{\bf Example 5.7.} $F(x)=(\alpha x^2+\beta)v(x)^2$. We use data from Example 4.1. We start from
$
1105=33^2+4^2=32^2+9^2,
$
and put
$$
f(x)=26^2(x^2-33^2)(x^2-4^2)(x^2-32^2)(x^2-9^2).
$$
Then $f$ has only simple rational roots. Put
$$
F(x)=26x^2(x^2-1105), \varphi(x)=(x+26\cdot(33\cdot 4)^2)(x+26\cdot(32\cdot 9)^2), g(y) = \varphi(y^2).
$$
A Magma \cite{magma} calculation shows that the equation
$$
26(x^2-1105)=Y^2
$$
has solutions $(x,y)=(X_i,Y_i)$ $(i \in \Z)$, with $(X_1,Y_1) = (247, -1248)$, $(X_2,Y_2)=(117,572)$, 
and
$$
(X_i,Y_i)=102(X_{i-1},Y_{i-1})-(X_{i-2},Y_{i-2})\ \ \ (i \in \Z_{>2}).
$$
So the equation $f(x)=g(y)$ has infinitely many integral solutions $(X,Y) = (X_i,X_iY_i)$. 

\section{Both $f$ and $g$ have only simple rational roots and \\$(F,G)$ is of the first or second kind}
\label{sec12fg}

In this section we consider equation \eqref{maineq} with both $f$ and $g$ having only simple rational roots. Without loss of generality we may assume $\deg(f) \leq \deg(g)$, hence $\deg(F) \leq \deg(G)$. We again assume $f= \varphi(F), g = \varphi(G)$. 

\begin{theorem}
\label{kinds12}
Let $f(x), g(x) \in \Q[x]$, both having only simple rational roots. Suppose that the equation $f(x)=g(y)$ has infinitely many rational solutions $x,y$ with a bounded denominator and that the corresponding standard pair $(F(x), G(x))\in \Q[x]$ is of the first or second kind. Then we can choose $F, G, \varphi$ such that one of the following items holds:

\vskip.1cm

\noindent {\rm \bf 1.} $\deg(f)\mid \deg(g)$, there exist $p_0 \in \Q, p_0 \neq 0$ and distinct $p_1, p_2, \dots, p_s \in \Q$ such that 
\begin{equation}
\label{q1}
f(x) =p_0 \prod_{i=1}^s (x-p_i),~~ g(y) = p_0\prod_{i=1}^s (G(y)-p_i),
\end{equation} 
$F(x)=x$ and $(G(y)-p_i)_{i=1}^s$ forms a $\pte_{n}$ set, where $n = \deg(G)$, for every $X \in \Z$ the equation $f(x)=g(y)$ has solution $(x,y) = (G(X),X)$.

\vskip.1cm

\noindent
{\rm \bf 2.} $\deg(f)\mid 2\deg(g)$, 
there exist $q_0\in\Q, q_0 \not= 0$ and distinct $q_1, q_2, \dots, q_s \in \Q_{>0}$ such that 
\begin{equation}
\label{q2}
f(x) =q_0 \prod_{i=1}^s (x-q_i)(x+q_i),~~ g(y) = q_0\prod_{i=1}^s (G(y)-q_i^2),
\end{equation}
$F(x)=x^2$, $G(y)\in\Q[y]$ has at most two roots of odd multiplicities, $(G(y)-q_i^2)_{i=1}^s$ forms a $\pte_{n}$ set, the equation $x^2 = G(y)$ has infinitely many rational solutions $(x,y)=(X_i,Y_i)$ $(i=1,2,\dots)$ with a bounded denominator and the equation $f(x)=g(y)$ has solutions $(x,y) = (X_i,Y_i)$ $(i=1,2,\dots)$ too.
\end{theorem}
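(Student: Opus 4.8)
The plan is to combine the case analysis already carried out in Section \ref{sec12} with the extra hypothesis that $g$ has only simple rational roots, which rigidifies the structure enough to yield the two listed normal forms. Since $(F,G)$ is a standard pair of the first or second kind and we have arranged $\deg(F)\le\deg(G)$, Lemma \ref{btcons} forces $\deg(F)\le 2$, so that $\deg(F)\in\{1,2\}$. I would treat these two cases separately, each producing exactly one of the two items.

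First I would handle $\deg(F)=1$. Passing to a similar equation we may take $F(x)=x$, so that $f=\varphi$ and $g=\varphi(G)$ with $n:=\deg(G)$. Write $\varphi(x)=p_0\prod_{i=1}^s(x-p_i)$ with $p_0\in\Q$, $p_0\ne 0$. Because $f=\varphi$ has only simple rational roots, the $p_i$ are automatically distinct rationals, which gives the form \eqref{q1} for $f$ and $g$. Next, since $g=p_0\prod_{i=1}^s\bigl(G(y)-p_i\bigr)$ has only simple rational roots, each factor $G(y)-p_i$ must have only simple rational roots; as the $G(y)-p_i$ share all coefficients except the constant term, the discussion of Section \ref{secpte} shows that $\{G-p_1,\dots,G-p_s\}$ is a $\pte_{n}$ set. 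Finally, for every $X\in\Z$ the pair $(x,y)=(G(X),X)$ solves $F(x)=G(y)$ and hence $f(x)=g(y)$, while $\deg(f)=s\mid ns=\deg(g)$. This is item 1.

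For $\deg(F)=2$ I would first normalize $F$ to $x^2$. The standard-pair tables give either $F(x)=x^2$ outright, or $F$ a general quadratic with $G(x)=x^q$; in the latter case $F\simeq x^2+c$, and after replacing $\varphi(x)$ by $\varphi(x-c)$ one may again take $F(x)=x^2$, exactly as in Section \ref{sec12}. Then $f(x)=p_0\prod_{i=1}^s(x^2-p_i)$ has only simple rational roots, which forces each $p_i$ to be the square of a rational; writing $p_i=q_i^2$ with $q_i\in\Q_{>0}$ distinct yields the form \eqref{q2}, the roots of $f$ being $\pm q_1,\dots,\pm q_s$. By Theorem \ref{lembt} the equation $x^2=G(y)$ has infinitely many rational solutions $(X_i,Y_i)$ with bounded denominator, and by the theorem of LeVeque \cite{lev2} (effectively Brindza \cite{bri}) this solubility forces $G$ to have at most two roots of odd multiplicity. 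As before, $g=p_0\prod_{i=1}^s\bigl(G(y)-q_i^2\bigr)$ having only simple rational roots makes $\{G-q_1^2,\dots,G-q_s^2\}$ a $\pte_{n}$ set; the solutions $(X_i,Y_i)$ carry over to $f(x)=g(y)$, and $\deg(f)=2s\mid 2ns=2\deg(g)$. This is item 2.

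The main obstacle I expect is the bookkeeping in the degree-$2$ normalization: one must check that replacing $F$ by $x^2$ through a similarity together with a shift of $\varphi$ preserves the simple-rational-root property of both $f$ and $g$ as well as the shape of the solution set, so that the final conclusions are correctly attributed to the normalized $G$ rather than the original one. The genuinely substantive input is the LeVeque--Brindza theorem, which alone is responsible for the constraint on the multiplicities of the roots of $G$; everything else is a structural repackaging of the Section \ref{sec12} analysis under the additional hypothesis on $g$.
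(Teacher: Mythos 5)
Your proposal is correct and follows essentially the same route as the paper's own proof: Lemma \ref{btcons} (with the standing assumption $\deg(f)\leq\deg(g)$) to reduce to $\deg(F)\in\{1,2\}$, the normalization $F(x)=x$ or $F(x)=x^2$ exactly as in Section \ref{sec12}, the $\pte_n$ structure forced by the simple rational roots of $f$ and $g$, Theorem \ref{lembt} for the solutions of $x^2=G(y)$, and LeVeque's theorem for the bound on roots of odd multiplicity. The only difference is that you make explicit the bookkeeping in the degree-two normalization (the shift $\varphi(x)\mapsto\varphi(x-c)$), which the paper handles by reference to the preceding section.
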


\begin{proof} By Lemma \ref{btcons} we know that $\deg(F)\leq 2$.

If $\deg(F)=1$, then (using the notation of Table 1) we have $p=0$, $q=1$, $F(x)=x$, $G = v$, $f = \varphi$ and $G(y)-p_1, \dots, G(y) - p_s$ form a $\pte_n$ set. Then $f,g$ are as in \eqref{q1}. For every $X\in\Z$ there is a solution $(x,y) = (G(X),X)$ to the equation. This case is covered by {\bf 1.}

If $\deg(F)=2$ then we may assume $F(x)=x^2$ according to the argument given in the preceding section.
As before, we see that $p_1, p_2, \dots, p_s$ are squares in $\Q$. Let $p_i=q_i^2$ for $i=1,2, \dots, s.$ Then $f,g$ are as in \eqref{q2} where $G(y) -q_1^2, \dots, G(y) - q_s^2$ form a $\pte_{n}$ set. Further, by Theorem \ref{lembt} we know that the equation $x^2 = G(y)$ has infinitely many solutions in rationals $x,y$ with a bounded denominator. Clearly, these solutions will be solutions to the original equation, too. The main result of LeVeque \cite{lev2} shows that $G(y)$ can have at most two roots of odd multiplicities. This is covered by {\bf 2.}
\end{proof}

\vskip.2cm


The following example illustrates that the results in Section \ref{secpte} imply that there are instances for $\deg(F)=1$ with $\deg(G) \in \{3,4,6\}$ and arbitrary $\deg(\varphi)$ (and hence arbitrarily large $\deg(f),\deg(g)$ as well). This is obvious for $\deg(G)=2$, cf. Example 5.1.

\vskip.2cm

\noindent {\bf Example 6.1.} (Cf. Examples 4.1, 4.2 and 4.3.)
For $\deg(G)=4$ choose $G(y) = y^4 - 1105y^2, F(x)=x$ and
$$
f(x) = \varphi(x)=(x+17424)(x+82944)(x+138384)(x+304704).
$$
Then $g(y)=\varphi(G(y))$ is given by
$$
(y^2-33^2)(y^2-4^2)(y^2-32^2)(y^2-9^2)(y^2-31^2)(y^2-12^2)(y^2-24^2)(y^2-23^2).
$$
For every integer $X$ we obtain a solution $(x,y) = (G(X),X)$ of equation \eqref{maineq}. It is obvious that the roots of $g$ are symmetric around $0$.

Similarly, for $\deg(G)=6$ choose
$$
G(y) = y^6-2\cdot 1729y^4 +1729^2y^2, F(x)=x,
$$
$$
f(x)= \varphi(x) = (x-26625600, (x-177422400)(x-508953600)(x-761760000).
$$
Then $g(y)=\varphi(G(y))$ is the polynomial $g(y) = \prod_{a \in T} (y^2-a^2)$ with $T=\{3, 40, 43, 8, 37, 45, 15, 32, 47, 23, 25, 48\}$.
Again, for every integer $X$ we obtain a solution $(x,y) = (G(X),X)$ of equation \eqref{maineq}.

Finally, for $\deg(G) = 3$ let $G(y) = y^3 - 1729^2 y, F(x)=x$,
\begin{multline*}
f(x) =\varphi(x)=\\
x(x^2 - 728932560^2)(x^2 - 1678772880^2)(x^2 - 1878480960^2)(x^2 - 286101600^2),
\end{multline*}
and so
$g(y) = y \prod_{a \in T} (y^2-a^2)$ with
$$
T=\{1729, 1840, 249, 1591, 1961, 656, 1305, 1984, 1185, 799, 1775, 96, 1679\}.
$$
Also in this case, for every integer $X$ we obtain a solution $(x,y) = (G(X),X)$ of equation \eqref{maineq}.

\vskip.2cm

Now we turn to the case $\deg(F)=2$. Our examples will show that in all the possible cases, $\deg(\varphi)$ can be arbitrary (whence $\deg(f)$ and $\deg(g)$ can be arbitrarily large). Note that $\deg(G)\geq 2$. Interchanging the roles of $F$ and $G$ if necessary if $\deg(F)=\deg(G)=2$, by the analysis in the cases $\deg(F)=2$ and $\deg(F)>2$ in Section \ref{sec12}, we see that $G$ is of the form $\alpha x v(x)^2$ or $(\alpha x^2+\beta) v(x)^2$.
The case where $G(x)$ is of the shape $\alpha x v(x)^2$ is covered by Example 5.6.  
The other possibility is that $G(x)$ is of the shape
$(\alpha x^2+\beta)v(x)^2$. 
The next example, which is another generalization of Example 1.2, treats this situation with $\deg(v)=0$.

\vskip.2cm 

\noindent
{\bf Example 6.2.} Suppose that the equation $x^2 = ay^2 +b$ with $a,b\in\Z$, $ab\neq 0$ has solutions $(X_i,Y_i)_{i=1}^{\infty} \in \mathbb{Z}^2$. Let $s\geq 1$, $F(x) = x^2$, $G(y) = ay^2+b$, $\varphi(x) = \prod_{i=1}^s (x-X_i^2)$. Then we have
$$
f(x)=\prod_{i=1}^s (x^2 - X_i^2),\ \ \ g(y)= \prod_{i=1}^s (ay^2+b-X_i^2) = a^s \prod_{i=1}^s (y^2-Y_i^2).
$$
So $f(x)$ and $g(y)$ both have simple rational roots. Further, clearly, the equation $f(x)=g(y)$ has as solutions $(X_i,Y_i)$ for all $i=1,2,\dots$. We see that $\deg(\varphi)$ can be arbitrary, hence $\deg(f)$ and $\deg(g)$ can be arbitrarily large.

%

%
%
%

\section{Standard pairs $F,G$ of the third or fourth kind}
\label{sec23}

To handle the cases corresponding to standard pairs of the third and fourth kind, we apply the following result.

\begin{lemma}
\label{lemdickson}
Let $a_1,\dots,a_N$ be distinct rationals, and assume that with some rational numbers $u_1,u_2,v_1,v_2,b$ with $u_1v_1b\neq 0$ we have
\begin{equation}
\label{eq1}
u_1D_N(x,b)+u_2=(v_1x+v_2-a_1)\cdots (v_1x+v_2-a_N),
\end{equation}
where $D_N(x,b)$ is the $N$-th Dickson polynomial with parameter $b$. Then $N\in\{1,2,3,4,6\}$.
\end{lemma}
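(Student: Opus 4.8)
The plan is to strip off the affine substitution, reduce \eqref{eq1} to the statement that a shifted Dickson polynomial $D_N(x,b)+c$ splits over $\Q$ into $N$ distinct linear factors, and then to extract from this splitting the rationality of $2\cos(2\pi/N)$; Niven's theorem then pins down $N$.

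First I would normalize. Since $D_N(x,b)$ is monic of degree $N$, comparing leading coefficients in \eqref{eq1} gives $u_1=v_1^N$, and writing $\beta_i=(a_i-v_2)/v_1$ and $c=u_2/u_1$ turns \eqref{eq1} into
$$ D_N(x,b)+c=\prod_{i=1}^N (x-\beta_i), $$
where $\beta_1,\dots,\beta_N\in\Q$ are distinct. The cases $N=1,2$ lie in the target set, so I assume $N\ge 3$; in particular $D_N(x,b)+c$ is separable. I would then record that $c^2\ne 4b^N$. Indeed, the standard identity $D_N(x,b)^2-4b^N=(x^2-4b)E(x)^2$ (see \cite{lmt}) shows, since $D_N-2b^{N/2}$ and $D_N+2b^{N/2}$ are coprime and $x^2-4b$ shares no root with $E$, that each root of $E$ is a \emph{double} root of one of $D_N\pm 2b^{N/2}$; a degree count forces each factor to absorb at least one such root once $N\ge 3$, so $c^2=4b^N$ would make $D_N(x,b)+c$ inseparable. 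Hence the quadratic $w^2+cw+b^N$ has two distinct roots $w_+\ne w_-$, both nonzero as $w_+w_-=b^N\ne 0$.

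The decisive step uses the functional equation $D_N\!\left(y+b/y,\,b\right)=y^N+(b/y)^N$. For each $\beta_i$ pick a root $y_i$ of $y^2-\beta_i y+b$, so the other root is $b/y_i$ and $\beta_i=y_i+b/y_i$; then $y_i^N+(b/y_i)^N=D_N(\beta_i,b)=-c$ while $y_i^N\cdot(b/y_i)^N=b^N$, so $y_i^N\in\{w_+,w_-\}$. Choosing for each $i$ the root with $y_i^N=w_+$ (exactly one works, since $w_+\ne w_-$), the $y_i$ are pairwise distinct (equal $y_i$ would force equal $\beta_i$) solutions of $y^N=w_+$, hence they are \emph{all} of them: $\{y_1,\dots,y_N\}=\{\zeta^j\eta:0\le j<N\}$ with $\eta^N=w_+$ and $\zeta$ a primitive $N$-th root of unity. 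Setting $\beta_{(j)}=\zeta^j\eta+b\,\zeta^{-j}\eta^{-1}$ and expanding directly yields the three-term relation
$$ \beta_{(j+1)}+\beta_{(j-1)}=\bigl(\zeta+\zeta^{-1}\bigr)\,\beta_{(j)}\qquad(j \bmod N). $$

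To finish, the $\beta_{(j)}$ are precisely the distinct rational roots, so at most one vanishes; as $N\ge 3$ one can choose $j_0$ with $\beta_{(j_0)}\ne 0$ and three distinct cyclic indices, whence $\zeta+\zeta^{-1}=(\beta_{(j_0+1)}+\beta_{(j_0-1)})/\beta_{(j_0)}\in\Q$. Since $\zeta+\zeta^{-1}=2\cos(2\pi/N)$, Niven's theorem (the only rational values of $\cos(r\pi)$, $r\in\Q$, are $0,\pm\tfrac12,\pm1$) gives $N\in\{1,2,3,4,6\}$; equivalently $\zeta+\zeta^{-1}$ generates the maximal real subfield of $\Q(\zeta_N)$, so its rationality forces that field to be $\Q$. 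I expect the only real difficulty to be the separability bookkeeping in the middle: excluding $w_+=w_-$ via the square-factor identity and verifying that the chosen $y_i$ exhaust the $N$-th roots of $w_+$. Once the recurrence is in hand, the passage to $2\cos(2\pi/N)\in\Q$ and Niven's theorem is immediate.
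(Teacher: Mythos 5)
Your proof is correct, and it takes a genuinely different route from the paper's. Both arguments start the same way: normalize \eqref{eq1} to $D_N(x,b)+c=\prod_{i=1}^N(x-\beta_i)$ with distinct rational $\beta_i$, and lift via $x=y+b/y$ so that the roots correspond to $N$-th roots of the two zeros $w_\pm$ of $w^2+cw+b^N$ (the paper's $\zeta,\xi$). But from there the paper only extracts that each lifted root is algebraic of degree at most two, deduces $\varphi(N)\leq 4$ (treating $u=0$ separately via $\varphi(2N)\leq 4$), and is left with $N\in\{1,2,3,4,5,6,8,10,12\}$; it then eliminates $N=5,8,10,12$ one by one through the quadratic subfields of $\Q(\varepsilon)$ and Magma-computed factorizations of $x^{2N}+ux^N+b^N$, exploiting that all quadratic factors $y^2+w_iy+b$ share the constant term $b$. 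You instead note that all $N$ lifted roots $y_i$ can be chosen with $y_i^N=w_+$ (your dichotomy is sound: the multiset $\{y_i^N,(b/y_i)^N\}$ equals $\{w_+,w_-\}$ and $w_+\neq w_-$, so exactly one choice works), hence they exhaust the $N$-th roots of $w_+$, and the three-term relation $\beta_{(j+1)}+\beta_{(j-1)}=(\zeta+\zeta^{-1})\beta_{(j)}$ among the \emph{rational} roots gives $\zeta+\zeta^{-1}\in\Q$ outright, i.e.\ $\varphi(N)\leq 2$. This is sharper in one stroke: it eliminates the entire case analysis and the computer algebra, and it handles $u=0$ and $u\neq 0$ uniformly, since you never need $w_\pm$ to be rational. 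Your division step is also safe: the $\beta_{(j)}$ are $N\geq 3$ distinct values, so at most one vanishes and a valid $j_0$ exists. Your substitute for the paper's one-line observation that $\zeta\neq\xi$ follows from distinctness of the $w_i$ --- the inseparability argument via $D_N(x,b)^2-4b^N=(x^2-4b)E(x)^2$ --- is correct but heavier than necessary: if $w_+=w_-$, the pairwise coprimality of the quadratics $y^2+w_iy+b$ (a common root would be $0$, impossible as $b\neq 0$) forces each of the $N\geq 3$ quadratics to have a double root, which only two values of $w_i$ allow; still, your degree count goes through. The one thing the paper's longer route buys is the explicit bookkeeping of the roots $\zeta_0\varepsilon^i,\xi_0\varepsilon^i$ over $\Q(\varepsilon)$, which it reuses immediately in the proof of Theorem \ref{remar} to parametrize \emph{all} solutions of the shifted-Dickson factorization for $N\in\{3,4,6\}$; your argument proves the lemma more cleanly but would need that structure re-derived for the subsequent classification.
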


\noindent
Note that $N\leq 12$ is already proved in \cite{hpt} (see the proof of Theorem 2.3 there). However, in this paper we need a more precise statement. To keep the presentation self-contained, we include the complete proof of the statement.

For appropriate choices of the parameters the cases $N\in\{1,2,3,4,6\}$ are possible. In Theorem \ref{remar} we describe these cases completely.

\vskip.2cm


\begin{proof}[Proof of Lemma \ref{lemdickson}]
Writing $w_i=(v_2-a_i)/v_1$ $(i=1,\dots,N)$ and $u=u_2/v_1^N$, dividing both sides of \eqref{eq1} by $v_1^N$ and using that $D_N$ is monic, we get the similar equation
\begin{equation}
\label{eq2}
D_N(x,b)+u=(x+w_1)\dots (x+w_N).
\end{equation}
Here $u\in\Q$ and $w_1,\dots,w_N$ are distinct rationals.

Applying the well-known identity
$$
D_N\left(y+\frac{b}{y},b\right)=y^N+\left(\frac{b}{y}\right)^N
$$
to \eqref{eq2}, we obtain
\begin{equation}
\label{eq3}
y^{2N}+uy^N+b^N=\prod\limits_{i=1}^N (y^2+w_iy+b)\ .
\end{equation}
Write $\zeta,\xi$ for the roots of the polynomial $Y^2+uY+b^N$. Clearly, $\zeta,\xi$ are algebraic numbers of degrees at most two. Further, 
$b\neq 0$ yields $\zeta\xi\neq 0$. Also observe that $\zeta\neq \xi$, since the numbers $w_i$ in \eqref{eq2} are distinct. If $u=0$, then the roots of the left-hand side of \eqref{eq3} are given by
\begin{equation} \label{eta}
\eta^j\sqrt{b}\ \ \ (j=0,1,\dots,2N-1),
\end{equation}
where $\sqrt{b}$ denotes one of the (complex) squareroots of $b$, and $\eta$ is a primitive $2N$-th root of unity. In view of the right-hand side of \eqref{eq3}, we see that the numbers \eqref{eta} are algebraic numbers of degrees at most two. Hence
$
\varphi(2N)=\deg(\eta)\leq 4.
$
This implies $N \in \{1,2,3,4,6\}.$

So from this point on, we assume $\zeta+\xi=u\neq 0$. Then the roots of the polynomial on the left hand side of \eqref{eq3} are given by
$$
\zeta_0 \varepsilon^i\ \ \text{and}\ \ \xi_0 \varepsilon^i\ \ \ (i=0,1,\dots,N-1),
$$
where $\zeta_0$ and $\xi_0$ are $N$-th roots of $\zeta$ and $\xi$, respectively, and $\varepsilon$ is a primitive $N$-th root of unity. 
Since these are the roots of the polynomial on the right hand side of \eqref{eq3}, they are distinct algebraic numbers of degrees at most two. In particular, $\zeta_0$ and $\zeta_0\varepsilon$ are at most quadratic algebraic numbers, so the degree of $\varepsilon$ is at most four. Hence $\varphi(N)\leq 4$, and we obtain $N\in\{1,2,3,4,5,6,8,10,12\}$.

To refine the restriction for $N$, we need a more careful consideration. Write $\zeta_1:=\zeta_0\varepsilon$. Then we see that
\begin{equation}
\label{eq4}
\varepsilon=\frac{\zeta_1}{\zeta_0}
\end{equation}
belongs to the number field $K:=\Q(\zeta_0,\zeta_1)$. Observe that if $\zeta_0\in\Q(\zeta_1)$, then $\varepsilon$ is (at most) quadratic, yielding $\varphi(N)\leq 2$, and our claim follows. So we may assume that $\deg(K)=4$, and also that $K=\Q(\varepsilon)$ and that $\zeta_0$ is quadratic. Denoting its algebraic conjugate by $\bar\zeta_0$, we have
$$
(\overline{\zeta_0})^N={\overline{\zeta_0^N}}=\bar\zeta=\xi.
$$
Therefore, without loss of generality we may assume that $\xi_0=\overline{\zeta_0}$ holds, in particular, that $\zeta_0$ and $\xi_0$ belong to the same quadratic subfield of $K$. From this point on, we shall use this assumption. We deal with the remaining cases in turn. For the calculations we used Magma \cite{magma}.

If $N=5,10$, then $K$ is defined by $x^4+x^3+x^2+x+1$. The only quadratic subfield of $K$ is given by $T_1:=\Q(\sqrt{5})$. So now $\zeta_0,\xi_0\in T_1$. Recall that $\zeta_0$, hence also $\xi_0$ is not rational. However, the (unique) factorization of 
\begin{equation}
\label{px}
P(x):=x^{2N}+ux^N+b^N=(x^N-\zeta_0^N)(x^N-\xi_0^N)
\end{equation} 
(into irreducible factors) in $T_1[x]$ contains both for $N=5$ and for $N=10$ the factors
$$
x^2+(3-\sqrt{5})\zeta_0x+\zeta_0^2~~{\rm and}~~x^2+(3-\sqrt{5})\xi_0x+\xi_0^2.
$$
Here the constant terms of the quadratic factors are not equal. Indeed, otherwise $\zeta_0^2=\xi_0^2$ would imply $\zeta_0=\pm \xi_0$, whence $\zeta=\pm \xi$, which is excluded. Hence we see that \eqref{eq3} is not possible in these cases.

Let now $N=8$. Then $K$ is defined by $x^4+1$. The number field $K$ has three quadratic subfields, namely $T_2=\Q(i)$, $T_3=\Q(\sqrt{2})$ and $T_4=\Q(i\sqrt{2})$.  Following the argument given above for the factorization of $P(x)$ defined by \eqref{px} we get that
\begin{itemize}
\item $x^2+i\zeta_0^2$ and $x^2+i\xi_0^2$ are factors of $P(x)$ in $T_2[x]$,
\item $x^2+\zeta_0^2$ and $x^2+\xi_0^2$ are factors of $P(x)$ in $T_3[x]$ and $T_4[x]$,
\end{itemize}
assuming that $\zeta_0,\xi_0\in T_2,T_3,T_4$, respectively. In all cases the constant terms of the quadratic factors are not the same. So $N=8$ is also impossible.

Finally, let $N=12$. Then $K$ is defined by $x^4-x^2+1$. The number field $K$ has three quadratic subfields, namely $T_5=\Q(i)$, $T_6=\Q(\sqrt{3})$ and $T_7=\Q(i\sqrt{3})$. Now, similarly as before, for the factorization of $P(x)$ given by \eqref{px} we obtain that
\begin{itemize}
\item $x^2+\zeta_0x+\zeta_0^2$ and $x^2+\xi_0x+\xi_0^2$ are factors of $P(x)$ in $T_5[x]$,
\item $x^2+\zeta_0^2$ and $x^2+\xi_0^2$ are factors of $P(x)$ in $T_6[x]$ and $T_7[x]$,
\end{itemize}
assuming that $\zeta_0,\xi_0\in T_5,T_6,T_7$, respectively. \\
Again, in all cases we observe that the constant terms of the quadratic factors are not identical. So $N=12$ is excluded, too.
\end{proof}

\begin{theorem} \label{remar}
Let $N \in \{3,4,6\}$. For any $w_1,w_2 \in \mathbb{Q}$ we can choose $w_3, \dots, w_N, b,u \in \mathbb{Q}$ such that \eqref{eq2} holds. On the other hand, this provides the only solutions of equation \eqref{eq2}.
\end{theorem}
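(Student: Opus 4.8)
The plan is to read \eqref{eq2} as an identity between monic polynomials of degree $N$ and to compare coefficients, exploiting that $D_N(x,b)$ involves only the monomials $x^{N-2i}$. Thus $D_N(x,b)+u$ is an \emph{even} polynomial when $N$ is even, and equals an odd polynomial plus the constant $u$ when $N$ is odd. Writing $e_1,\dots,e_N$ for the elementary symmetric functions of $w_1,\dots,w_N$, matching the coefficient of $x^{N-j}$ forces $e_j=0$ for every odd $j$ with $1\le j\le N-1$, while the even $e_j$ (and, for odd $N$, the top term $e_N$) are prescribed by the coefficients of $D_N(x,b)$ and by $u$. This one observation drives both halves of the statement.

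For existence I would exhibit an explicit completion. For $N=3$ the only constraint is $e_1=0$, so I set $w_3=-(w_1+w_2)$; since every cubic with vanishing $x^2$-coefficient equals $D_3(x,b)+u=x^3-3bx+u$, reading off $b=-e_2/3$ and $u=w_1w_2w_3$ finishes this case, and here the completion is unique. For $N=4$ I take the roots $\pm w_1,\pm w_2$, so that the product is $(x^2-w_1^2)(x^2-w_2^2)$; as every even monic quartic is of the form $D_4(x,b)+u=x^4-4bx^2+2b^2+u$, I read off $b=(w_1^2+w_2^2)/4$ and $u=w_1^2w_2^2-2b^2$. For $N=6$ I use the composition identity $D_6(x,b)=D_3(x^2-2b,b^2)$ and choose the six roots $\pm w_1,\pm w_2,\pm(w_1+w_2)$; the power-sum identities underlying the $\pte_6$ construction of Section \ref{secpte} (in particular $w_1^2+w_2^2+(w_1+w_2)^2=2M$ with $M=w_1^2+w_1w_2+w_2^2$) guarantee that $(x^2-w_1^2)(x^2-w_2^2)(x^2-(w_1+w_2)^2)$ agrees with $D_6(x,b)+u$ for $b=M/3$ and a suitable rational $u$.

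For the converse I would show that every solution is captured by this construction. Vanishing of the odd $e_j$ makes the root set symmetric about $0$ for $N=4,6$, and gives $w_1+w_2+w_3=0$ for $N=3$, so the cubic case is immediate and the quartic roots are forced to be $\pm w_1,\pm w_2$. For $N=6$ I substitute $X=x^2$ and use $D_6(x,b)=D_3(X-2b,b^2)$ to rewrite \eqref{eq2} as $D_3(X-2b,b^2)+u=(X-r_1^2)(X-r_2^2)(X-r_3^2)$. Applying the $N=3$ coefficient comparison to the shifted quantities $r_i^2-2b$ (which must sum to $0$ and satisfy the second Dickson relation) gives a system whose solution is $b=(r_1^2+r_2^2\pm r_1r_2)/3$ and $r_3^2=(r_1\pm r_2)^2$, so $r_3=\pm(r_1\pm r_2)$. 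Hence the six roots necessarily have the shape $\pm w_1,\pm w_2,\pm(w_1\pm w_2)$, exactly as produced above.

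The main obstacle is the sextic case in both directions: unlike quartics, not every even monic sextic is a shifted Dickson polynomial, since the family $D_6(x,b)+u$ is only two-dimensional, so the extra constraint must be isolated. The reduction $X=x^2$ together with $D_6=D_3\circ D_2$ is precisely the device that extracts it and ties the analysis back to the transparent $N=3$ case. A secondary point needing care is rationality: one must check that the forced value $r_3^2=(w_1\pm w_2)^2$ is a perfect square in $\Q$, which is the pleasant coincidence making the construction work and which matches the integral $\pte_6$ sets of Section \ref{secpte}. Finally I would dispose of the degenerate configurations (coincidences among the $w_i$, or $w_1\pm w_2=0$) separately, noting that for generic $w_1,w_2$ the constructed roots are distinct, as required in \eqref{eq2}.
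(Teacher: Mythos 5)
Your proposal is correct, and for the crucial sextic case it follows a genuinely different route from the paper. The paper handles $N=6$ by reusing the machinery from the proof of Lemma \ref{lemdickson}: it substitutes $x=y+b/y$, writes the twelve roots of $y^{12}+uy^{6}+b^{6}$ as $\pm\zeta_0\varepsilon^{i},\pm\xi_0\varepsilon^{i}$ with $\varepsilon$ a primitive sixth root of unity, and reads the $w_i$ off the rational quadratic factors of this degree-$12$ polynomial, arriving at \eqref{felt1} and \eqref{felt2}. You instead stay entirely at the level of coefficient comparison in $\Q[x]$: evenness of $D_6(x,b)+u$ forces the root set into pairs $\pm r_1,\pm r_2,\pm r_3$ (note $0$ cannot be a simple root of an even polynomial), and the composition $D_6(x,b)=D_3(x^2-2b,b^2)$ reduces everything to your already-settled $N=3$ case in the variable $X=x^2$. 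The resulting system $r_1^2+r_2^2+r_3^2=6b$ and $e_2(r_1^2,r_2^2,r_3^2)=9b^2$ amounts to $e_1^2=4e_2$ for the squares $r_i^2$, and the Heron-type factorization $2\sum_{i<j}r_i^2r_j^2-\sum_i r_i^4=(r_1+r_2+r_3)(-r_1+r_2+r_3)(r_1-r_2+r_3)(r_1+r_2-r_3)$ then forces $r_3=\pm(r_1\pm r_2)$, which is exactly the paper's \eqref{felt1}; you should make this factorization explicit, as it is the one nontrivial algebraic step your outline leaves compressed. Your $N=3$ case is identical to the paper's, and for $N=4$ you replace the paper's identity $e_3=-(w_1+w_2)(w_1+w_3)(w_2+w_3)$ by the cleaner evenness observation. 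What each approach buys: the paper's cyclotomic argument is uniform with its impossibility proof for $N\in\{5,8,10,12\}$ in Lemma \ref{lemdickson}, while yours is more elementary and self-contained, avoiding the field $\Q(\varepsilon)$ altogether. A further small merit of your write-up is that you flag the degenerate inputs (coincidences among the constructed $w_i$, or $w_1=w_2=0$ forcing $b=0$), which the phrase ``for any $w_1,w_2\in\Q$'' in the statement silently ignores and the paper's proof does not address.
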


The cases $N=1$ and $N=2$ are trivial. Indeed, for $N=1$ we have $D_1(x,b)=x$, so $w_1=u$ can be any rational number. Further, for $N=2$
we have $D_2(x,b)=x^2-2b$, whence $w_1+w_2=0$,  $w_1w_2 = -2b+u$. Therefore all cases of \eqref{eq2} are given by
$$
(x+w_1)(x-w_1) = D_2(x,b) + (2b - w_1^2),
$$
i.e. with $u=2b - w_1^2$ for arbitrary $b, w_1 \in \Q$.  

\begin{proof}[Proof of Theorem \ref{remar}]
We consider the possibilities in turn.

{\bf The case $N=3$}. We have 
\begin{equation} \label{d3}
D_3(x,b)=x^3-3bx,
\end{equation} 
hence 
$$
w_1+w_2+w_3=0, w_1w_2+w_1w_3+w_2w_3 = -3b, w_1w_2w_3 = u.
$$
This gives 
\begin{equation} \label{3bu} 
w_3 = -w_1-w_2, b= (w_1^2 +  w_1w_2 + w_2^2)/3, u=  -w_1^2w_2 -w_1w_2^2.
\end{equation} 
Thus we have for any $w_1, w_2 \in \mathbb{Q}$ that 
\begin{equation} \label{d3enu}
(x+w_1)(x+w_2)(x-w_1-w_2) = D_3(x, b) +u
\end{equation}
and this provides all possibilities for \eqref{eq2}.

{\bf The case $N=4$}. We have 
\begin{equation} \label{d4}
D_4(x,b)=x^4-4bx^2+2b^2.
\end{equation}
This implies
$$
w_1+w_2+w_3+w_4=0
$$
and
$$
w_1w_2w_3+w_1w_2w_4+w_1w_3w_4+w_2w_3w_4=0.
$$
It follows that $$0 = w_1w_2w_3-(w_1w_2+w_1w_3+w_2w_3)(w_1 +w_2+w_3) = -(w_1+w_2)(w_1+w_3)(w_2+w_3).$$
We assume, without loss of generality, 
\begin{equation} \label{choice4w}
w_1+w_3=0, ~~{\rm hence} ~~w_2+w_4=0.
\end{equation} 
Further comparison of coefficients gives
\begin{equation} \label{value4b}
b=-\frac{w_1w_2+w_1w_3+w_1w_4+w_2w_3+w_2w_4+w_3w_4}{4} =  \frac{w_1^2+w_2^2}{4}
\end{equation}
and
\begin{equation} \label{value4u}
u=w_1w_2w_3w_4 -2b^2= w_1^2w_2^2 - \frac18(w_1^2 +w_2^2)^2= -\frac18(w_1^4 -6w_1^2w_2^2+w_2^4).
\end{equation}
For any $w_1,w_2 \in \Q$ and $b,u$ chosen as above we have
\begin{equation} \label{d4enu}
(x+w_1)(x-w_1)(x+w_2)(x-w_2) = D_4(x,b) +u
\end{equation} 
and this provides all possibilities for \eqref{eq2}. 

{\bf The case} $N=6$ is the most involved one. We have 
\begin{equation}
\label{d6}
D_6(x,b)=x^6-6bx^4+9b^2x^2-2b^3.
\end{equation}
On the other hand, the roots of the polynomial on the left hand side of \eqref{eq3} are given by
$$
\pm \zeta_0,\ \pm \zeta_0\varepsilon,\ \pm \zeta_0\varepsilon^2,\ \pm \xi_0,\pm \xi_0\varepsilon,\ \pm \xi_0\varepsilon^2,
$$
where $\varepsilon$ is a primitive sixth root of unity (i.e. a root of $x^2-x+1$), and either $\zeta_0,\xi_0\in\Q$, or they are conjugated quadratic algebraic numbers from the field $K=\Q(\varepsilon)$. Anyhow, the factorization of the polynomial on the right hand side of \eqref{eq3} over $K$ reads as
\begin{multline*}
(y-\zeta_0)(y+\zeta_0)(y-\zeta_0\varepsilon)(y+\zeta_0\varepsilon)(y-(1-\varepsilon)\zeta_0)(y+(1-\varepsilon)\zeta_0)\cdot\\
\cdot(y-\xi_0)(y+\xi_0)(y-\xi_0\varepsilon)(y+\xi_0\varepsilon)(y-(1-\varepsilon)\xi_0)(y+(1-\varepsilon)\xi_0).
\end{multline*}
Note that the (algebraic) conjugate of $\varepsilon$ is $1-\varepsilon$. Hence we immediately get that the right hand side of \eqref{eq3} is given by
\begin{multline*}
(y^2-(\zeta_0+\xi_0)y+\zeta_0\xi_0)(y^2+(\zeta_0+\xi_0)y+\zeta_0\xi_0)\cdot\\
\cdot(y^2-(\zeta_0\varepsilon+\xi_0(1-\varepsilon))y+\zeta_0\xi_0)(y^2+(\zeta_0\varepsilon+\xi_0(1-\varepsilon))y+\zeta_0\xi_0)\cdot\\
\cdot(y^2-(\zeta_0(1-\varepsilon)+\xi_0\varepsilon)y+\zeta_0\xi_0)(y^2+(\zeta_0(1-\varepsilon)+\xi_0\varepsilon)y+\zeta_0\xi_0).
\end{multline*}
Here all the above quadratic polynomials have rational coefficients. The coefficients of $y$ are just the numbers $w_i$ from \eqref{eq3} (and \eqref{eq2}). Observe that (by choosing an appropriate indexing) we have 
\begin{equation}
\label{felt1}
w_3 = w_1+w_2, \ w_4=-w_1,\ w_5=-w_2,\ w_6=-w_3
\end{equation}
in \eqref{eq2}. Put $W = w_1^2 + w_1w_2 + w_2^2$. A simple calculation yields that
\begin{multline*}
(x+w_1)(x+w_2)(x+w_3)(x+w_4)(x+w_5)(x+w_6)=\\
=x^6-2Wx^4+W^2x^2-w_1^2w_2^2(w_1+w_2)^2.
\end{multline*}
Comparing the coefficients with $D_6(x,b)+u=x^6-6bx^4+9b^2x^2-2b^3+u$ we see that
\begin{equation}
\label{felt2}
b=\frac{W}{3},~~u= \frac{2W^3}{27} -w_1^2w_2^2(w_1+w_2)^2.
\end{equation}

On the other hand, for any $w_1,w_2 \in \mathbb{Q}$ we have, choosing $b$ and $u$ as in \eqref{felt2}, that
\begin{equation} \label{d6enu}
(x+w_1)(x-w_1)(x+w_2)(x-w_2)(x+w_1+w_2)(x-w_1-w_2) = D_6(x,b)+u.
\end{equation}
Thus this provides all  possibilities for \eqref{eq2} if $N=6$.
\end{proof}

We give some examples to show that for $\deg(F)=m \in \{3,4,6\}$ equation \eqref{maineq} with $f$ of the form \eqref{fx} can have infinitely many solutions $(x,y)\in\Q^2$ with a bounded denominator. For the sake of completeness, we shall do so both for the third and for the fourth kind. By the gcd condition in Table 1, $\deg(F)=m=3$ cannot occur in the latter case.

\vskip.2cm
\noindent 
{\bf Example 7.1.} Let $(F,G)$ be a standard pair of the third kind, $\deg(F)=m=3$, $\deg(G)=n=4$ and $b=7$. 
We have
$$
3b^4=3\cdot 7^4=14^2+14\cdot 77+77^2 = 23^2+23\cdot 71+71^2.
$$
We choose
$
(w_1,w_2)=(14,77),\ (23,71),
$
which, by \eqref{3bu}, gives
$
w_3=-91,\ -94,
$
respectively. Thus, by \eqref{d3enu},
\begin{multline*}
D_3(x,7^4)=(x+14)(x+77)(x-91)+14\cdot 77\cdot 91=\\
=(x+23)(x+71)(x-94)+23\cdot 71\cdot 94.
\end{multline*}
According to formula (5) of \cite{bt} we have, for all coprime positive integers $m,n$ and integers $b$,
\begin{equation} \label{dmndnm}
D_m(D_n(x,b),b^n) = D_n(D_m(x,b),b^m).
\end{equation}
Therefore the equation $F(x) := D_3(x,b^4)=D_4(y,b^3) =: G(y)$ has solutions $(x,y) = (D_4(X,7), D_3(X,7))$ for every $X\in \Z$. We obtain that
the equation
\begin{multline*}
f(x) := (x+14)(x+77)(x-91)(x+23)(x+71)(x-94)=\\
=(D_4(y,7^3)-14\cdot 77\cdot 91)(D_4(y,7^3)-23\cdot 71\cdot 94) =:g(y)
\end{multline*}
has the same solutions. Note that here we have
$$
\varphi(x)=(x-14\cdot 77\cdot 91)(x-23\cdot 71\cdot 94).
$$

\vskip.2cm

\noindent
{\bf Example 7.2.} Let $(F,G)$ be of the third kind with $\deg(F)=m=4$, $\deg(G)=n=3$ and $b=5$. 
We have $4b^3=4\cdot 5^3=4^2+22^2=10^2+20^2$. Taking $(w_1,w_2)=(4,22)$, $(10,20)$, by \eqref{choice4w}, we get $(w_3,w_4)=(-4,-22)$, $(-10,-20)$, and by \eqref{value4u}, $u=-23506$, $8750$, respectively. That is, we have
\begin{multline*}
D_4(x,5^3)=(x+4)(x-4)(x+22)(x-22)+23506=\\
=(x+10)(x-10)(x+20)(x-20)-8750.
\end{multline*}
Since, by \eqref{dmndnm}, the equation $F(x) := D_4(x,5^3)=D_3(y,5^4) =: G(y)$ has infinitely many integral solutions $(x,y)= (D_3(X,5), D_4(X,5))$ $(X\in\Z)$, we obtain that the equation
\begin{multline*}
f(x) := (x+4)(x-4)(x+22)(x-22)(x+10)(x-10)(x+20)(x-20)=\\
=(D_3(y,5^4)-23506)(D_3(y,5^4)+8750) =: g(y)
\end{multline*}
has the same solutions. Here we have
$
\varphi(x)=(x-23506)(x+8750).
$

\vskip.2cm

\noindent
{\bf Example 7.3.} The case $(F,G)$ is a standard pair of the third kind, $\deg(F)=m=6$, $\deg(G)=n=5$ and $b=7$. We have $3\cdot 7^5=211^2+211\cdot 25+25^2=196^2+196\cdot 49+49^2$. Taking $(w_1,w_2)=(211,25)$, $(196,49)$ by \eqref{felt1} we get
$$
(w_3,w_4,w_5,w_6)=(236,-211,-25,-236),\ (245,-196,-49,-245),
$$
and by \eqref{felt2}, $u=7945347009886$, $3958608139486$, respectively. That is, we have
\begin{multline*}
D_6(x,7^5)=\\
(x+211)(x+25)(x+236)(x-211)(x-25)(x-236)-7945347009886=\\
=(x+196)(x+49)(x+245)(x-196)(x-49)(x-245)-3958608139486.
\end{multline*}
Since, by \eqref{dmndnm}, the equation $F(x) := D_6(x,7^5)=D_5(y,7^6) =: G(y)$ has infinitely many integral solutions $(x,y)= (D_5(X,7), D_6(X,7))$ $(X\in\Z)$, we obtain that the equation
\begin{multline*}
f(x) := (x+211)(x+25)(x+236)(x-211)(x-25)(x-236)\cdot\\
\cdot (x+196)(x+49)(x+245)(x-196)(x-49)(x-245)=\\
=(D_5(y,7^6)+7945347009886)(D_5(y,7^6)+3958608139486) =: g(y)
\end{multline*}
has the same solutions. Here we have
$$
\varphi(x)=(x+7945347009886)(x+3958608139486).
$$

\vskip.2cm

\noindent
{\bf Example 7.4.} The case $(F,G)$ is a standard pair of the fourth kind with $\deg(F)=m=4$, $\deg(G)=n=10$ and $b=5\cdot 13=65$. We have $4b=2^2+16^2=8^2+14^2$. We take $(w_1,w_2)=(2,16)$, $(8,14)$, which, by \eqref{choice4w}, gives $(w_3,w_4)=(-2,-16)$, $(-8,-14)$ and, by \eqref{value4u}, $u=-7426$, $4094$, respectively. Thus
\begin{multline*}
D_4(x,65)=(x+2)(x-2)(x+16)(x-16)+7426=\\
=(x+8)(x-8)(x+14)(x-14)-4094.
\end{multline*}
According to formula (10) of \cite{bt} with $m=4$, $n=10$ we have for $a,b,v_1,v_2 \in \Q$ with
\begin{equation}
\label{pelll}
b^2v_1^2 +av_2^2 =4ab
\end{equation}
that 
\begin{equation} \label{d4d6} 
b^{-2}D_4(b^{-2}(v_2^5-5bv_2^3+5b^2),b) = -a^{-5}D_{10} (v_1v_2,a).
\end{equation}
Here $b=65$. We choose $a=-10\cdot 65^2$, and observe that then \eqref{pelll} becomes the (generalized) Pell equation $v_1^2 - 10v_2^2 = -2600$, and $(v_1,v_2)=(X_i,Y_i)$ given by
$(X_0,Y_0)=(-80,30)$, $(X_1,Y_1)=(280,90)$ and
$$
(X_i,Y_i)=38(X_{i-1},Y_{i-1})-(X_{i-2},Y_{i-2})\ \ \ (i\geq 2)
$$
are integral solutions to it. Thus, for any constant $c$,
$$
b^{-2}D_4(x,b)-cb^{-2}=-a^{-5}D_{10}(y,a)-cb^{-2}
$$
has infinitely many solutions $(x,y)\in\Q^2$ with a bounded denominator which are independent of $c$, and we obtain that the equation
\begin{multline*}
f(x) := b^{-4}(x-2)(x+2)(x-16)(x+16)(x-8)(x+8)(x-14)(x+14)=\\
=(-a^{-5}D_{10}(y,a)-7426b^{-2})(-a^{-5}D_{10}(y,a)+4094b^{-2}) =: g(y)
\end{multline*}
(with $a=-10\cdot 65^2$ and $b=65$) has infinitely many solutions $(x,y)\in\Q^2$ with a bounded denominator. Here we have $F(x) = b^{-2}D_4(x,b)$, $G(x) = -a^{-5}D_{10}(x,a)$, $\varphi(x)=(x-7426b^{-2})(x+4094b^{-2})$.

\vskip.2cm

\noindent
{\bf Example 7.5.} The case $(F,G)$ is a standard pair of the fourth kind with $\deg(F)=m=6$, $\deg(G)=n=10$, and $b=7\cdot 13=91$. We have $3b=16^2+16\cdot 1+1^2=11^2+11\cdot 8+8^2$. Letting $(w_1,w_2)=(16,1)$, $(11,8)$ by \eqref{felt1} we obtain
$$
(w_3,w_4,w_5,w_6)=(17,-16,-1,-17),\ (19,-11,-8,-19),
$$
and \eqref{felt2} yields $u=1433158$, $-1288442$, respectively. That is, we have
\begin{multline*}
D_6(x,91)=\\
(x+16)(x+1)(x+17)(x-16)(x-1)(x-17)-1433158=\\
=(x+11)(x+8)(x+19)(x-11)(x-8)(x-19)+1288442.
\end{multline*}
By formula (10) of \cite{bt} with $m=6$, $n=10$, if for $a,b,v_1,v_2 \in \Q$ we have
\begin{equation}
\label{pelll2}
b^3v_1^2 +av_2^2 =4ab
\end{equation}
then
\begin{equation} \label{d6d10}
b^{-3}D_6(b^{-2}(v_2^5-5bv_2^3+5b^2),b) = -a^{-5}D_{10} (v_1(v_2^2-b),a).
\end{equation}
Now $b=91$. We take $a=-14\cdot 91^3$, and observe that then \eqref{pelll2} becomes the (generalized) Pell equation $v_1^2 - 14v_2^2 = -5096$, and $(v_1,v_2)=(X_i,Y_i)$ given by
$(X_0,Y_0)=(-140,42)$, $(X_1,Y_1)=(252,70)$ and
$$
(X_i,Y_i)=30(X_{i-1},Y_{i-1})-(X_{i-2},Y_{i-2})\ \ \ (i\geq 2)
$$
are integer solutions to it. Hence, for any $c\in\Q$,
$$
b^{-3}D_6(x,b)-cb^{-3}=-a^{-5}D_{10}(y,a)-cb^{-3}
$$
has infinitely many solutions $(x,y)\in\Q^2$ with a bounded denominator which are independent of $c$. This implies that the equation
\begin{multline*}
f(x) := b^{-6}(x+16)(x+1)(x+17)(x-16)(x-1)(x-17)\cdot\\
\cdot(x+11)(x+8)(x+19)(x-11)(x-8)(x-19)=\\
=(-a^{-5}D_{10}(y,a)+1433158b^{-3})(-a^{-5}D_{10}(y,a)-1288442b^{-3}) =: g(y)
\end{multline*}
(with $a=-14\cdot 91^3$ and $b=91$) has infinitely many solutions $(x,y)\in\Q^2$ with a bounded denominator. Here we have $F(x) = b^{-3}D_6(x,b)$, $G(x) = -a^{-5}D_{10}(x,a)$, $\varphi(x)=(x+1433158b^{-3})(x-1288442b^{-3})$.

\vskip.2cm
\noindent
{\bf Remark 7.1.} We recall that if $M$ is the product of $\rho$ distinct primes of the form $\equiv 1\pmod{4}$, then the number of representations of $M$ as    
$\alpha_1^2 + \alpha_2^2$ with $\alpha_1, \alpha_2 \in \mathbb{Z}$, $\alpha_1 > \alpha_2 > 0$, $\gcd(\alpha_1, \alpha_2) = 1$ equals $2^{\rho-1}$ according to Theorem 7.5 of \cite{lev}. Similarly, the number of representations of $M$ as $x^2 + xy + y^2$ with coprime integers $x,y$, $x>y>0$ equals $2^{\rho -1}$ (see \cite{dic} par. 48, item 4).
It follows that in all the above examples $\deg(\varphi)$ can equal any $s \in \Z_{>0}$, (hence $\deg(f)$ and $\deg(g)$ can be made arbitrarily large) by choosing suitable $W$ with number of representations $\geq s$ and corresponding values $u=u_1,u_2,\dots,u_s$. 

\vskip.2cm
\noindent
{\it Proof of the first statement of Theorem 1.1.} By Lemma \ref{lemdickson} equation \eqref{maineq} with \eqref{fx} implies that $\deg(F)\in \{1,2,3,4,6\}$, if the corresponding standard pair $(F,G)$ is of the third or fourth kind. This combined with Lemma \ref{btcons} completes the proof of the first statement of Theorem \ref{caseA}. $\qed$

\vskip.2cm
\noindent {\bf Remark 7.2.} 
Let $f(x) \in \Q[x]$ have only simple rational roots and let $g(x) \in \mathbb{Q}[x]$. 
Suppose the equation $f(x)=g(y)$ has infinitely many solutions $(x,y) \in \Q^2$ with a bounded denominator. By Lemma \ref{lemdickson} we have $\deg(F) \in \{1,2,3,4,6\}$.
Put $s = \gcd(\deg(f), \deg(g))$, $m = \deg(f)/s, n= \deg(g)/s.$ Then  $ \gcd(m,n) =1$ and $m \in \{1,2,3,4,6\}$ or $n \in \{1,2\}$. 

If $m=1$ we refer to Example 5.1 to see that all pairs $n,s$ are possible. For $m=2$ Example 5.3 shows that all pairs $(n,s)$ (with $n$ odd since $m$ and $n$ are coprime) are possible. By \eqref{dmndnm} the equation $D_m(x,b^n) = D_n(y,b^m)$ with $\gcd(m,n)=1$ has infinitely many solutions in integers $(x,y)$ for any integer $b$. If $m=4$, we proceed as in Example 7.2 (where $s=2$) using a $b$ which is the product of sufficiently many distinct primes $\equiv 1\pmod{4}$. If $m=3$ or $m=6$, then we proceed as in Examples 7.1 or 7.3 (where $s=2$ too) using a $b$ which is the product of sufficiently many distinct primes $\equiv 1\pmod{6}$. Remark 7.1 underlines that this can be done for any $s$. Thus every pair $(\deg(f), \deg(g))$ with corresponding $m \in \{1,2,3,4,6\}$ can be represented.

\section{Both $f$ and $g$ have only simple rational roots and \\$(F,G)$ is of the third or fourth kind}
\label{sec34}

If both $f$ and $g$ have simple rational roots, then by symmetry we may assume that $\deg(f)\leq \deg(g)$. Throughout this chapter we shall do so without further mentioning. We show that if in this case the equation $f(x)=g(y)$ has infinitely many rational solutions with a bounded denominator and the corresponding standard pair $(F,G)$ is of the third or fourth kind, then $\deg(F)\leq 2$. Note that $\deg(f)\leq \deg(g)$ implies $\deg(F)\leq\deg(G)$.

\begin{theorem}
\label{impos}
Suppose that $f$ and $g$ have only simple rational roots, and the equation $f(x)=g(y)$ has infinitely many rational solutions with a bounded denominator. If the corresponding standard pair $(F,G)$ is of the third or fourth kind, then $\deg(F)\leq 2$ holds.
\end{theorem}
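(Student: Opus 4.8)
The plan is to work with the normalization $f=\varphi(F)$, $g=\varphi(G)$ and reduce everything to a single arithmetic incompatibility. Write $\varphi(x)=p_0\prod_{i=1}^s(x-p_i)$ with $s\ge 1$. Since $f$ has only simple rational roots, each $F(x)-p_i$ splits into distinct rational linear factors; since $g$ has only simple rational roots, each $G(x)-p_i$ splits as well, and the \emph{same} values $p_1,\dots,p_s$ occur in both. Applying Lemma \ref{lemdickson} to the components of $f$ gives $\deg(F)\in\{1,2,3,4,6\}$, and applying it to the components of $g$ gives $\deg(G)\in\{1,2,3,4,6\}$. Combining these with the coprimality restrictions of Table 1 and with $\deg(F)\le\deg(G)$, the only admissible pairs with $\deg(F)\ge 3$ are the third kind with $(\deg F,\deg G)=(3,4)$ and the fourth kind with $(\deg F,\deg G)=(4,6)$; every other admissible pair already satisfies $\deg(F)\le 2$. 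Thus it suffices to eliminate these two cases, and I will do so by fixing one shared value $p:=p_1$ and analysing the cubic and quartic factorizations it produces.

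For the third kind $F=D_3(x,\alpha^4)$ and $G=D_4(x,\alpha^3)$ for one common $\alpha$. Using \eqref{d3}, $F-p=x^3-3\alpha^4x-p$ has three distinct rational roots, so its discriminant $27(4\alpha^{12}-p^2)$ is a nonzero rational square; since $27=3\cdot 3^2$, this forces $4\alpha^{12}-p^2$ to be $3$ times a nonzero rational square. On the other hand, by \eqref{d4}, $G-p=x^4-4\alpha^3x^2+2\alpha^6-p=(x^2-v_1^2)(x^2-v_2^2)$ with distinct $v_1,v_2\in\Q$, whence $2\alpha^6-p=v_1^2v_2^2$ and $2\alpha^6+p=\tfrac14(v_1^2-v_2^2)^2$. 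Multiplying gives $4\alpha^{12}-p^2=(2\alpha^6-p)(2\alpha^6+p)=\bigl(\tfrac12 v_1v_2(v_1^2-v_2^2)\bigr)^2$, a rational square. Comparing, a nonzero rational square equals $3$ times a nonzero rational square, forcing $3$ to be a square in $\Q$, a contradiction.

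For the fourth kind $F=\alpha^{-2}D_4(x,\alpha)$ and $G=-\beta^{-3}D_6(x,\beta)$. Clearing the nonzero constants, $F-p$ splits iff $D_4(x,\alpha)-\alpha^2 p=(x^2-v_1^2)(x^2-v_2^2)$ does; then $v_1^2v_2^2=\alpha^2(2-p)$ and $(v_1^2-v_2^2)^2=4\alpha^2(2+p)$ force both $2-p$ and $2+p$ to be rational squares (nonzero, by distinctness of the roots). For $G$ I use the identity $D_6(x,\beta)=D_3(x,\beta)^2-2\beta^3$ coming from \eqref{d6}: the polynomial $D_6(x,\beta)+\beta^3 p=D_3(x,\beta)^2-\beta^3(2-p)$ can split into distinct rational linear factors only if $\beta^3(2-p)=r^2$ is a square and each cubic $D_3(x,\beta)\mp r=x^3-3\beta x\mp r$ then splits; the cubic discriminant $27(4\beta^3-r^2)$ being a nonzero square makes $\beta^3(2+p)=4\beta^3-r^2$ equal to $3$ times a nonzero rational square. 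Substituting the two square values $2-p$ and $2+p$ into $\beta^3(2-p)=r^2$ and into the last relation yields that $\beta^3$ is simultaneously a rational square and $3$ times a rational square, again forcing $3$ to be a square in $\Q$. Hence both cases are impossible and $\deg(F)\le 2$.

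The main obstacle — and the real content of the argument — is recognizing that $F$ and $G$ are bound together through the common shift $p$, so that the factor of $3$ forced by the cubic discriminant (present for $D_3$, and for $D_6$ via its $D_3$-decomposition) collides with the perfect-square constraint forced by the quartic $D_4$. The degree bookkeeping of the first paragraph and the explicit shapes of $D_3,D_4,D_6$ from Theorem \ref{remar} are routine; the only point requiring care is checking that distinctness of the roots keeps every relevant square nonzero, so that the resulting identity genuinely asserts that $3$ is a rational square.
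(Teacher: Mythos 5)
Your proof is correct, but it takes a genuinely different route from the paper's. After the same reduction to $(\deg F,\deg G)=(3,4)$ (third kind) and $(4,6)$ (fourth kind) via Lemma \ref{lemdickson} applied to both $f$ and $g$, the paper handles the third kind with the Davenport--Lewis--Schinzel theorem (Lemma \ref{dals}): it forms $D(z)=\mathrm{disc}(U(x)+z)$ and $E(z)=\mathrm{disc}(V(y)+z)$, argues that infinitude of integral solutions forces every root of $D$ to be a (rational) root of $E$, parametrizes $3(A_1^2+A_1A_2+A_2^2)=s^2$ through Lemma \ref{lemquad}, and reaches a contradiction because the product of the two roots of $D(z)$ is $-\tfrac{1}{108}$ times a square while the products of roots of $E(z)$ are $\pm$ squares; the fourth kind is dispatched by a sign argument, since Theorem \ref{remar} forces $b=W/3>0$, so the even polynomials $a^{-2}D_4$ and $-b^{-3}D_6$ have leading coefficients of opposite sign and only finitely many bounded-denominator solutions can exist. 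You instead exploit the shared shift $p=p_1$ purely arithmetically: the cubic discriminant $27(4\alpha^{12}-p^2)$ must be a nonzero square while the even-quartic factorization makes $4\alpha^{12}-p^2=(2\alpha^6-p)(2\alpha^6+p)$ a nonzero square, and in the fourth kind the identity $D_6=D_3^2-2\beta^3$ plays $\beta^3(2-p)\in(\Q^\times)^2$ against $\beta^3(2+p)\in 3\,(\Q^\times)^2$, with $2\pm p$ squares coming from the $D_4$ side; in both cases $3$ would be a rational square. I checked your identities and nonvanishing claims and they all hold. Your argument buys something real: it uses neither Lemma \ref{dals} (hence no Siegel-type finiteness input) nor, in fact, the infinitude of solutions beyond the Bilu--Tichy decomposition itself --- it shows the required simultaneous splittings of $F-p$ and $G-p$ are impossible outright; consistently with this, in the paper's Example 7.1 one computes $4\cdot 7^{12}-(14\cdot 77\cdot 91)^2=3\cdot 123480^2$, so the $g$ there indeed cannot have simple rational roots. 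Two small points to tighten: since standard pairs are unordered, in the fourth kind you should also note the arrangement $F=-\beta^{-2}D_4(x,\beta)$, $G=\alpha^{-3}D_6(x,\alpha)$, which your computation handles verbatim with $p$ replaced by $-p$ (the paper fixes one arrangement equally silently); and the step ``$D_3(x,\beta)^2-c$ splits into distinct rational linear factors only if $c$ is a nonzero square'' deserves its one-line justification: evaluating at any rational root $\rho$ of the sextic gives $c=D_3(\rho,\beta)^2$, and $c\neq 0$ since otherwise the sextic $D_3^2$ would have multiple roots.
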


In the proof we use the following lemmas.

\noindent 
We denote the discriminant of a polynomial $P$ by ${\rm disc}(P)$.

\begin{lemma} [Davenport, Lewis, Schinzel, \cite{dls}, Theorem 1] \label{dals}
Let $F(x) \in \mathbb{Z}[x]$ be of degree $m>1$ and $G(y) \in \mathbb{Z}[x]$ of degree $n>1$.
Let
$$
D(z) = {\rm disc}(F(x) + z),~~E(z) = {\rm disc}(G(y)+z).
$$
Suppose there are at least $[\frac{1}{2} m]$ roots of $D(z) = 0$ for which $E(z) \neq 0$. Then $F(x) - G(y)$ is irreducible over the complex field. Further, the genus of the equation $F(x) - G(y) = 0$ is strictly positive except possibly when $n=2$ or $m=n=3$. Apart from these possible exceptions, the equation has at most a finite number of integral solutions.
\end{lemma}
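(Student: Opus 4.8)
The plan is to prove the statement geometrically, by studying the affine plane curve $C\colon F(x)-G(y)=0$ together with the degree-$mn$ covering $C\to\mathbb{P}^1$ given by $t=F(x)=G(y)$, and then invoking Siegel's theorem on integral points of curves. First I would record the dictionary between the hypothesis and the branch loci of the two maps $x\mapsto F(x)$ and $y\mapsto G(y)$ on $\mathbb{P}^1$. Since $D(z)=\mathrm{disc}(F(x)+z)$ has degree $m-1$ in $z$ and its roots are exactly the numbers $z=-F(\beta)$ with $F'(\beta)=0$, the distinct roots of $D$ correspond bijectively (via $z\mapsto -z$) to the finite critical values of $F$, and likewise the roots of $E$ to the critical values of $G$. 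The hypothesis that at least $\lfloor m/2\rfloor$ roots $z$ of $D$ satisfy $E(z)\neq 0$ then says precisely that $F$ has at least $\lfloor m/2\rfloor$ critical values that are \emph{not} critical values of $G$; I will call these the good critical values.

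For the irreducibility of $F(x)-G(y)$ over $\C$ I would argue through the inertia (monodromy) at the good critical values. Fix a good critical value $c$ and a point $y_0$ with $G(y_0)=c$; since $c$ is not a critical value of $G$ we have $G'(y_0)\neq 0$, so $y-y_0$ is a uniformizer and $G(y)-c=(y-y_0)\cdot(\text{unit})$. Writing $F(x)-c=\prod_i (x-\xi_i)^{e_i}$, each cluster of $e_i$ colliding $x$-sheets satisfies $(x-\xi_i)^{e_i}\sim(\text{unit})(y-y_0)$, so over the completion $\C((y-y_0))$ it generates a totally ramified extension of degree $e_i$ in which the $e_i$ sheets are mutually conjugate; hence those sheets must lie in one irreducible factor of $F(x)-G(y)$. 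Thus the partition of the $m$ sheets of $x\mapsto F(x)$ induced by a hypothetical factorization is invariant under the local monodromy permutation $\sigma_c$ at every good critical value. I would then combine these permutations with the transitivity of the full monodromy group of $F$ (which holds since $F(x)-t$ is irreducible over $\C(t)$) to conclude that the only invariant partition is the trivial one, so that $F(x)-G(y)$ is irreducible over $\C$.

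The positivity of the genus I would obtain from the Riemann--Hurwitz formula applied to the projection $\pi\colon C\to\mathbb{P}^1$, $(x,y)\mapsto y$, which has degree $m$. Over each of the $n$ preimages $y_0$ of a good critical value $c$ the fibre of $\pi$ ramifies exactly as $F$ does over $c$, contributing $\mathrm{ind}(\sigma_c)\geq 1$; summed over the $\geq\lfloor m/2\rfloor$ good critical values and their $n$ preimages this already gives at least $n\lfloor m/2\rfloor$ to the total ramification $R$, while the totally ramified points of $F$ and $G$ above $t=\infty$ contribute a further $m-\gcd(m,n)$. Writing $2\gamma-2=-2m+R$ for the genus $\gamma$ of the (now irreducible) curve $C$, positivity of $\gamma$ amounts to $R\geq 2m-1$, and the inequality $n\lfloor m/2\rfloor+m-\gcd(m,n)\geq 2m-1$ holds in all cases except exactly $n=2$ (for every $m$) and $m=n=3$, which are the two stated exceptions. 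Once $\gamma>0$, Siegel's theorem yields the finiteness of the integral points of $C$, hence of the integral solutions of $F(x)=G(y)$, completing the proof.

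I expect the irreducibility step to be the main obstacle. The local analysis only shows that the sheet-partition is invariant under the good monodromy permutations, and a naive edge count is insufficient, since $\lfloor m/2\rfloor$ transpositions need not by themselves connect all $m$ sheets. Making the argument rigorous requires exploiting the interaction of the good inertia generators with the transitive monodromy of $F$ to rule out a nontrivial invariant partition---equivalently, estimating the genus of a hypothetical smallest factor of $x$-degree $\le\lfloor m/2\rfloor$ and deriving a contradiction from the abundance of branch points it would have to carry. By contrast, the Riemann--Hurwitz genus count and the appeal to Siegel's theorem are routine once the branch data have been set up.
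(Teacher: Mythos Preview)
The paper does not prove this lemma; it is quoted from Davenport--Lewis--Schinzel \cite{dls} and used as a black box in the proof of Theorem~\ref{impos}. There is no argument in the present paper to compare yours against.

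Your outline is along the lines of the original proof in \cite{dls}. The translation of the hypothesis into the statement that $F$ has at least $\lfloor m/2\rfloor$ critical values that are not critical values of $G$ is correct, as is the Riemann--Hurwitz count for the genus of the (assumed irreducible) curve: your contribution $m-\gcd(m,n)$ over $y=\infty$ is right, and one checks that the inequality $n\lfloor m/2\rfloor + m - \gcd(m,n) \ge 2m-1$ fails precisely when $n=2$ or $m=n=3$. The final appeal to Siegel is standard.

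The genuine gap is the one you flag yourself. Your first line of attack on irreducibility cannot succeed as stated: invariance of a sheet-partition under the inertia at the good critical values, even combined with transitivity of the full monodromy of $F$, does not force the partition to be trivial, because a block system for a transitive group is by definition preserved by \emph{every} group element, the good inertia generators included. The alternative you sketch in your last sentence---take a hypothetical irreducible factor of minimal $x$-degree $p\le\lfloor m/2\rfloor$ and bound its ramification over $\mathbb{P}^1_y$ against the abundance of good branch points---is indeed the route taken in \cite{dls}, but the nontrivial point there is to control how the repeated root of $F(x)-c$ distributes between the factor and its cofactor at each of the $\ge n\lfloor m/2\rfloor$ good fibres. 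That idea is missing from your write-up; as it stands, the irreducibility portion is a plan rather than a proof.
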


\begin{lemma}
\label{lemquad}
Let $a,b,c$ be rational numbers such that
$$
3a^2+b^2=c^2.
$$
Then there exist rational numbers $u,v,w$ such that
$$
a=\pm w(2uv),\ \ \ b=\pm w(3u^2-v^2),\ \ \ c=\pm w(3u^2+v^2)
$$
with independent choices of the $\pm$ signs.
\end{lemma}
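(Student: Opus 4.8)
The plan is to recognize $3a^2+b^2=c^2$ as the affine cone over the smooth projective conic in $\mathbb{P}^2$ carrying the obvious rational point $(a:b:c)=(0:1:1)$. Such a conic is a rational curve, parametrized by a single parameter via projection (stereographic-type) from that marked point, and the displayed formulas are precisely this parametrization written homogeneously, with the extra factor $w$ accounting for the scaling of the cone. So the lemma amounts to making this parametrization explicit and checking it is onto. First I would record the easy direction: substituting $a=2uvw$, $b=(3u^2-v^2)w$, $c=(3u^2+v^2)w$ gives $c^2-b^2=w^2\bigl((3u^2+v^2)^2-(3u^2-v^2)^2\bigr)=12u^2v^2w^2=3a^2$, so every such triple is a solution. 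The real content is the reverse inclusion, that \emph{every} rational solution is of this shape.

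For the reverse direction I would split into two cases. Assume first $a\neq 0$. Since $c^2-b^2=3a^2\neq 0$ we have $c\neq b$, so $t:=(c-b)/a$ is a nonzero rational; this $t$ is exactly the projection coordinate from $(0:1:1)$. From $(c-b)(c+b)=3a^2$ and $c-b=ta$ we get $c+b=3a/t$, hence $c=a(t^2+3)/(2t)$ and $b=a(3-t^2)/(2t)$. Now choosing $u=1$, $v=t$, $w=a/(2t)$ (well defined since $t\neq 0$) I would verify directly that $2uvw=a$, that $(3u^2-v^2)w=(3-t^2)\,a/(2t)=b$, and that $(3u^2+v^2)w=(3+t^2)\,a/(2t)=c$. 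Thus the all-plus-sign version of the formulas already represents this solution.

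It remains to treat the degenerate case $a=0$, where the projection coordinate breaks down. Then the equation reduces to $c^2=b^2$, i.e. $c=\pm b$. Taking $v=0$, $u=1$, $w=b/3$ yields $(a,b,c)=(0,b,b)$, while taking $u=0$, $v=1$, $w=-b$ yields $(0,b,-b)$, so both signs are reached. Combining the two cases, in every situation suitable rationals $u,v,w$ exist, which proves the lemma; the independent $\pm$ signs in the statement merely enlarge the set of admissible representatives and are not even needed for existence.

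I expect no serious obstacle: this is a classical rational parametrization of a conic possessing a marked rational point. The only points requiring genuine care are the bookkeeping of the degenerate configurations — guaranteeing $t\neq 0$ (equivalently $c\neq b$) precisely when $a\neq 0$, so that $w=a/(2t)$ is well defined, and separately handling $a=0$, where one must check that both possibilities $c=\pm b$ are actually attained by an allowed choice of $(u,v,w)$.
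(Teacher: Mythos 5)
Your proof is correct, and it takes a genuinely different route from the paper. The paper first disposes of $abc=0$ (noting this forces $a=0$), then scales a positive solution to a primitive triple of coprime positive integers $p=ta$, $q=tb$, $r=tc$ with $3p^2+q^2=r^2$, and invokes the classical integer parametrization of this equation due to Desboves (recorded in Dickson), which comes in two parity-dependent shapes --- $p=uv$, $q=(3u^2-v^2)/2$, $r=(3u^2+v^2)/2$ when $q-r$ is odd, and $p=2uv$, $q=3u^2-v^2$, $r=3u^2+v^2$ when $q-r$ is even --- finally setting $w=t^{-1}$ to undo the scaling. You instead work directly over $\Q$ with the projection parameter $t=(c-b)/a$ from the rational point $(0:1:1)$ of the conic: from $(c-b)(c+b)=3a^2$ and $c-b=ta$ you solve $b=a(3-t^2)/(2t)$, $c=a(t^2+3)/(2t)$, and then $u=1$, $v=t$, $w=a/(2t)$ realizes the formulas with all plus signs, the degenerate case $a=0$ (forced when $abc=0$, since $3a^2=c^2$ with $a\neq 0$ would make $\sqrt{3}$ rational) being handled by explicit choices of $(u,v,w)$ reaching both $c=b$ and $c=-b$; your verification that $t\neq 0$ follows correctly from $c^2-b^2=3a^2\neq 0$. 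Since the lemma is a statement about rational (not integral or primitive) solutions, your argument is entirely adequate and buys self-containedness: it avoids the coprimality normalization, the parity case split, and the citation to the classical literature (where, as the paper's footnote observes, Dickson's account even omits one of the two cases), at the cost of not exhibiting the integer-primitive structure that the paper's reduction makes visible.
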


\begin{proof} If $abc=0$ then we easily see that $a=0$. Then letting $u=0$ and $v=1$, the statement follows.

So we may assume that $a,b,c$ are positive. Then there exists a unique $t\in \Q$ such that $p:=ta$, $q:=tb$, $r:=tc$ are coprime positive integers. Observe that then $p,q,r$ are pairwise coprime, $r>q$ and 
$$
3p^2+q^2=r^2.
$$
By factoring $r^2-q^2$ it follows that the solutions of this equation are given by (cf. A. Desboves \cite{des} and Dickson \cite{dick}, II p. 405 \footnote{Dickson mentions only the second option. By that he misses, for example, the trivial solution $p=q=1, r=2.$})
$$
p=uv,\ \ \ q=\frac{3u^2-v^2}{2},\ \ \ r=\frac{3u^2+v^2}{2}~~{\rm if}~ q-r ~{\rm is ~odd,~ and} 
$$ 
$$
p=2uv,\ \ \ q=3u^2 - v^2,\ \ \ r=3u^2+v^2~~{\rm if}~ q-r ~{\rm is~ even,} 
$$
for some integers $u,v$. Choose $w = t^{-1}$.
\end{proof}

\begin{proof} [Proof of Theorem \ref{impos}]
Suppose that the equation $f(x)=g(y)$ has infinitely many solutions $x,y\in \Q$ with a bounded denominator, and write $(F,G)$ for the corresponding standard pair of the third or fourth kind. Assume that $\deg(F)\geq 3$. Then it follows from Lemma \ref{lemdickson} that $\deg(F),\deg(G)\in \{3,4,6\}$. In view of the gcd-restrictions on standard pairs of the third and fourth kinds it remains to consider $(m,n) := (\deg(F), \deg(G)) = (3,4)$ for the third kind and $=(4,6)$ for the fourth kind.

\vskip.2cm

\noindent
{\bf Standard pairs of the third kind.} We have $(m,n) = (3,4)$. Write
$$
\varphi(x)=p_0(x-p_1)\cdots(x-p_s)
$$
with $p_0\in\Q$, $p_0\neq 0$ and $p_i\in\C$ $(i=1,\dots,s)$. Since the roots of $f(x)=\varphi(F(x))$ are simple and rational, we see that $p_1,\dots,p_s$ are distinct and rational. So we can write 
$$
F(x)-p_i =(x-a_1^{(i)})(x-a_2^{(i)})(x-a_3^{(i)}),
$$
$$
G(y)-p_i =(y-b_1^{(i)})(y-b_2^{(i)})(y-b_3^{(i)})(y-b_4^{(i)}) \ \ (i=1,\dots,s).
$$
Here the $3s$ numbers $a$ form the set of roots of $f$ and are therefore distinct rationals. Similarly the $4s$ numbers $b$ form the set of roots of $g$ and are therefore distinct rationals. We know that the equation
\begin{equation} \label{p_1}
F(x)-p_1=G(y)-p_1
\end{equation}
has infinitely many solutions in rationals $x,y$ with a bounded denominator. Since $F$ is an odd and $G$ is an even polynomial (because they are Dickson polynomials of degree $3$ and $4$, respectively), this implies that the equation (after changing the indexing of the roots if it is necessary)
$$
(x-a_1^{(1)})(x-a_2^{(1)})(x+a_1^{(1)}+a_2^{(1)})=(y^2-(b_1^{(1)})^2)(y^2-(b_2^{(1)})^2)
$$
has infinitely many solutions in rationals $x,y$ with a bounded denominator. Then there exist positive integers $\Delta_1,\Delta_2$  such that, omitting the superscript $(1)$ for simplicity and putting
$$
A_i=\Delta_1a_i\ (i=1,2,3)\ \ \ \text{and}\ \ \ B_j=\Delta_2b_j\ (j=1,2),
$$
the equation
\begin{equation}
\label{maineq34}
U(x):=(x-A_1)(x-A_2)(x+A_1+A_2)=\Delta(y^2-B_1^2)(y^2-B_2^2)=:V(y)
\end{equation}
with $\Delta=\Delta_1^3/\Delta_2^4$ has infinitely many solutions in integers $x,y$.

It follows from Lemma \ref{dals} that, writing
$$
D(z)=\text{disc}(U(x)+z)\ \ \ \text{and}\ \ \ E(z)=\text{disc}(V(y)+z),
$$
every root of $D(z)$ is a root of $E(z)$. A Maple calculation reveals that the roots of 
$D(z)$ are 
\begin{equation} 
-A_1^2A_2 - A_1A_2^2 \pm \frac 29 \sqrt{3(A_1^2+A_1A_2+A_2^2)^3}
\end{equation}
and that the roots of $E(z)$,
\begin{equation}
\label{broots}
-\Delta B_1^2B_2^2,\ \ \ \Delta\left(\frac{B_1^2-B_2^2}{2}\right)^2
\end{equation}
(the latter one being a double root), are rational. So the roots of $D(z)$ have to be rational. Hence, for some $s \in \Q$,
\begin{equation}
\label{quadratic}
3(A_1^2+A_1A_2+A_2^2)=s^2.
\end{equation}
We rewrite \eqref{quadratic} as
$$
3(2A_1+A_2)^2+(3A_2)^2=(2s)^2.
$$
By Lemma \ref{lemquad}, we obtain
$$
2A_1+A_2=\pm w(2uv),\ \ \ 3A_2=\pm w(3u^2-v^2)
$$
with some $u,v,w\in\Q$ and independent choice of the $\pm$ signs. This yields
$$
(A_1,A_2)=w\left(\frac{-3u^2\pm 6uv+v^2}{6}, \pm \frac{3u^2-v^2}{3}\right).
$$
(Here in place of the factor $\pm w$ we can simply write $w$, since $w\in\Q$ is arbitrary.) Therefore the roots of $D(z)$ are given by
$$
\frac{1}{2} w^3u^2(u-v)^2(u+v)^2,\ \ \ -\frac{1}{54} w^3v^2(3u-v)^2(3u+v)^2.
$$
Since, by \eqref{broots}, the products of any two roots \eqref{broots} of $E(z)$ are $\pm$ squares and $2\cdot 54=108$ is not a square in $\Q$, we see that one of the roots of $D(z)$ is zero. Then $E(z)$ has also a root 0. However, then either $B_1B_2=0$ or $B_1=\pm B_2$, which contradicts the distinctness of the roots $B_1,B_2,B_3,B_4$. This contradiction proves that 
\eqref{maineq34} has only finitely many solutions $(x,y) \in \mathbb{Z}^2$, hence \eqref{p_1} and thus also the equation $f(x)=g(y)$ has only finitely many rational solutions with a bounded denominator. So this case cannot occur.

\vskip.2cm

\noindent 
{\bf Standard pairs of the fourth kind.} In this case the only possibility is $(m,n)=(4,6)$, and Theorem \ref{lembt} implies that the standard pair $(F(x),G(y))$ is of the form $(a^{-2}D_4(x,a), -b^{-3}D_6(y,b))$. Further, the equation
\begin{equation} \label{di64}
a^{-2}D_4(x,a)=-b^{-3}D_6(y,b)
\end{equation}
should have infinitely many rational solutions $x,y$ with  bounded denominator. However, by Theorem \ref{lemdickson} we know that here $b$ is of the form $(w_1^2+w_1w_2+w_2^2)/3$ with some $w_1,w_2\in\Q$, in particular, $b>0$. However, since the signs of the leading coefficients of the even degree polynomials in \eqref{di64} are different, this equation can have only finitely many solutions with a bounded denominator. So this case cannot occur either, and the proof of Theorem \ref{impos} is complete.
\end{proof}

\section{A sharpening for $f,g$ with only simple rational roots}
\label{secnew}

We give a refinement of Theorem \ref{caseA} in case both $f$ and $g$ have only simple rational roots. This completes the proof of Theorem \ref{caseA}.
\begin{theorem}
\label{thmnew}
Suppose that $f$ and $g$ have only simple rational roots, and the equation $f(x)=g(y)$ has infinitely many rational solutions with a bounded denominator. Let $k= \deg(f), \ell = \deg(g)$. 
If $k \leq \ell$, then $k \mid 2\ell$, $f$ is a {\rm PTE}$_{m}$-polynomial and  $g$ is a {\rm PTE}$_{\ell m/k }$-polynomial with some $m \in \{1,2\}$. 

Conversely, if $k,\ell$ are positive integers with $k \mid \ell$ and $g$ is a {\rm PTE}$_{\ell /k}$-polynomial of degree $\ell$ with only simple rational roots, then there exists a polynomial $f(x) \in \Q[x]$ with $\deg(f) = k$ and only simple rational roots such that the equation $f(x) = g(y)$ has infinitely many rational solutions with a bounded denominator.
\end{theorem}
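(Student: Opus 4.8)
The plan is to obtain the forward implication from the Bilu--Tichy decomposition sharpened by the earlier lemmas, and to settle the converse by an explicit construction. For the forward direction, Theorem~\ref{lembt} provides a standard pair $(F,G)$ and linear polynomials $\kappa,\lambda$ with $f=\varphi(F(\kappa))$ and $g=\varphi(G(\lambda))$; since $F(\kappa)\simeq F$ and $G(\lambda)\simeq G$ preserve both degrees and the $\pte$ structure, I may assume $f=\varphi(F)$ and $g=\varphi(G)$. The hypothesis $k\le\ell$ gives $\deg(F)\le\deg(G)$, so $\deg(F)=\min(\deg F,\deg G)$. If $(F,G)$ is of the first or second kind, then Lemma~\ref{btcons} forces $\deg(F)\le 2$; if it is of the third or fourth kind, then Theorem~\ref{impos}---which is exactly where the hypothesis that \emph{both} $f$ and $g$ have only simple rational roots is used---again gives $\deg(F)\le 2$. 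Hence in all cases $m:=\deg(F)\in\{1,2\}$.

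It then remains to phrase this in the language of Section~\ref{secpte}. Put $s=\deg(\varphi)$ and $n=\deg(G)$, so that $k=ms$ and $\ell=ns$. Since $f=\varphi(F)$ has only simple rational roots and $\deg(F)=m$, the polynomial $f$ is by definition a $\pte_m$-polynomial, and likewise $g=\varphi(G)$ is a $\pte_n$-polynomial. The identity $n=\ell/s=\ell m/k$ shows that $g$ is a $\pte_{\ell m/k}$-polynomial, and from $m\in\{1,2\}$ one has $m\mid 2$, whence $k=ms\mid 2ns=2\ell$. This establishes the first assertion.

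For the converse I would argue constructively. As $g$ is a $\pte_{\ell/k}$-polynomial of degree $\ell$ with only simple rational roots, by definition it admits a decomposition $g=\psi(G)$ with $\deg(G)=\ell/k$ and hence $\deg(\psi)=k$; the discussion of Section~\ref{secpte} shows moreover that the roots of $\psi$ are distinct rationals, so $\psi$ has only simple rational roots. Setting $f:=\psi$ yields a polynomial of degree $k$ with only simple rational roots for which $g(y)=f(G(y))$. Consequently $(x,y)=(G(Y),Y)$ solves $f(x)=g(y)$ for every $Y\in\Z$; since $G\in\Q[y]$, the values $G(Y)$ share a common bounded denominator, so this family gives infinitely many rational solutions with a bounded denominator, as required.

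I expect no serious obstacle in the theorem itself, since the substantive work has already been carried out in Lemma~\ref{btcons} and Theorem~\ref{impos}; the only delicate point is to ensure that the simple-rational-roots hypothesis on \emph{both} $f$ and $g$ is genuinely invoked, as this is what collapses the third- and fourth-kind cases to $\deg(F)\le 2$ and thereby forces $m\in\{1,2\}$ rather than $m\in\{1,2,3,4,6\}$. After that reduction the remaining steps are bookkeeping with degrees together with the elementary fact that $G(Y)$ has bounded denominator.
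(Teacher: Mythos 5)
Your proposal is correct and follows essentially the same route as the paper's own proof: the forward direction combines Lemma~\ref{btcons} (with $k\le\ell$ making $\deg(F)$ the minimum) and Theorem~\ref{impos} to force $\deg(F)\le 2$, then reads off the $\pte$ structure from $f=\varphi(F)$, $g=\varphi(G)$, exactly as the paper does. Your converse construction, taking $f(x)=p_0(x-p_1)\cdots(x-p_k)$ from the decomposition $g(y)=p_0(G(y)-p_1)\cdots(G(y)-p_k)$ and the solutions $(x,y)=(G(X),X)$ with bounded denominator, is also the paper's argument verbatim.
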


\begin{proof}
Suppose that the equation $f(x)=g(y)$ has infinitely many solutions $x,y\in \Q$ with a bounded denominator. Write $(F,G)$ for a corresponding standard pair. Combining Lemma \ref{btcons} and Theorem \ref{impos} we see that $\deg(F)\leq 2$, hence $\deg(f)\mid 2\deg(g)$. Without loss of generality we may assume $F(x)=x^m$ with $m \in \{1,2\}$. If $\varphi(x) = p_0(x-p_1) \cdots (x-p_s)$, then the rationals $p_1, \dots, p_s$ are distinct, $f(x)$ is similar to $p_0(x^m-p_1) \cdots (x^m-p_s)$ and $g(y)$ is similar to $p_0(G(y)-p_1) \cdots (G(y)-p_s)$, which both have simple rational roots. Thus $f$ is a {\rm PTE}$_{m}$-polynomial,  $g$ is a {\rm PTE}$_{\ell m/k }$-polynomial.

Conversely, let $k \mid \ell$ and $g$ be a {\rm PTE}$_{\ell /k}$-polynomial of degree $\ell$ with only simple rational roots. Then $g$ is of the form
\begin{equation} \label{factg}
g(y) = p_0(G(y)-p_1)(G(y)-p_2) \cdots (G(y) - p_k)
\end{equation}
for some $p_0, p_1, \dots, p_k \in \Q$ with $p_1, p_2, \dots, p_k$ distinct.  Write $f(x) = p_0(x-p_1) \cdots (x-p_k)$. Then the equation $f(x) = g(y)$ has solutions $(x,y) = (G(X),X)$ for every $X \in \Z$. 
\end{proof}





\noindent {\bf Remark 9.1.} If in \eqref{factg} $p_i = b_i^2$ for $b_i \in \Q, i=1, \dots, k$, then we may choose $F(x)=x^2, f(x) = (x-b_1)(x+b_1) \cdots (x-b_k)(x+b_k)$.
This is the case $m=2$ in which $f$ is both a PTE$_{1}$-polynomial and a PTE$_{2}$-polynomial.

\vskip.2cm

\noindent {\bf Remark 9.2.} In Sections \ref{sec12} and \ref{sec12fg} we have shown that $\deg(\varphi)$ can be arbitrary, hence $\deg(f), \deg(g)$ can be arbitrarily large. A remaining question is how large $\deg(v)$ in Table 1 can be, if both $f$ and $g$ have only simple rational roots. Without loss of generality we may assume $F(x)=x$ or $F(x)=x^2$. 
Below we treat the cases with the largest degree of $v$. As before we distinguish between the cases $G(y) = \alpha yv(y)^2$ and $G(y) = (\alpha y ^2+ \beta)v(y)^2$ with $\alpha \beta \not= 0$.


\vskip.2cm

\noindent {\bf Example 9.1.} Case $F(x)=x, G(y)=v(y)$. It is known that the sets
$$
T_1 := \{\pm22, \pm61, \pm86, \pm127, \pm140, \pm151\},
$$
$$
T_2 := \{\pm35, \pm47, \pm94, \pm121, \pm146, \pm148\}
$$
form an ideal $\pte_{2,12}$ pair. Let
$$
v(y) =\frac{\prod_{t \in T_1} (y-t) + \prod_{t \in T_2} (y-t)}{2}\ \ \ \text{and}\ \ \ A=\frac{\prod_{t \in T_1} t - \prod_{t \in T_2} t}{2}.
$$ 
Then $$g(y) := \prod_{t \in T_1\cup T_2} (y-t) = (v(y) + A)(v(y) - A) = v(y)^2 - A^2.$$ We define $f(x) = x^2 - A^2$.
Thus $f,g$ both have simple rational roots and the equation $f(x) = g(y)$ has solutions $(x,y) =  (\pm v(X), X)$ for every $X \in \Z$. Here $\varphi(x) = x^2- A^2$.

\vskip.2cm
\noindent
{\bf Example 9.2.} Case $F(x)=x^2, G(y)$ is of the form $\alpha yv(y)^2$. 
\\It is known that the symmetric sets
$$
T_3:=\{-98,-82,-58,-34,13,16,69,75,99\}~{\rm and}~ T_4 := \{t \in T_3 :  -t \}
$$
form an ideal $\pte_{2,9}$ pair. Put $g(y)=\prod_{t\in T_3}(y-t^2)$, $A=\prod_{t \in T_3} t$ and $yT(y)=\prod_{t\in T_3}(y-t)+A$. Then
$$
g(y^2):=\prod_{t\in T_3}(y-t)\cdot \prod_{t\in T_4}(y-t)=(yT(y)-A)(yT(y)+A).
$$
Observe that $yT(y)$ is an odd polynomial, so $T(y)$ is an even polynomial (the coefficients of $y^i$ with $i$ odd are $0$ in $T$). This yields that $T(y)=v(y^2)$ for some $v(y) \in \Q[y]$ and therefore $g(y)=yv(y)^2-A^2$. So letting $f(x)=(x-A)(x+A)$, we see that the equation $f(x)=g(y)$ has solutions $(x,y) = (Xv(X^2), X)$ for every $X \in \Z$.
Here we have $\varphi(x)=x-A^2$, $F(x)=x^2$, $G(y) = yv(y)^2$,  $\deg(g) / \deg(f) = 9/2$.

\vskip.2cm
\noindent
{\bf Example 9.3.} Case $F(x)=x^2, G(y)$ is of the form $(\alpha y^2+\beta)v(y)^2$ with $\alpha \beta \not= 0$. 
An example with $\deg(v)=0$ is given by Example 6.2. 

\section{Equal products from blocks}
\label{seceb}

We give an application of Theorem \ref{caseA} for equal products from blocks of integers of bounded size. By a block we mean a set of consecutive integers.

\begin{theorem} For every positive integer $N$ there exist only finitely many pairs of disjoint blocks $A$ and $B$ of size at most $N$ with the property that for some $k,\ell$ with $1\leq k < \ell$ and $k\nmid 2\ell$, there exist distinct elements $a_1,\dots,a_k\in A$ and distinct elements $b_1,\dots,b_\ell\in B$ such that
\begin{equation}
\label{eq_pr}
a_1\cdots a_k=b_1\cdots b_\ell.
\end{equation}
\end{theorem}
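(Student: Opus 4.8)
The plan is to reduce the statement to finitely many applications of Theorem~\ref{caseA}, used in the form of the divisibility $\deg(f)\mid 2\deg(g)$ that holds when both polynomials have only simple rational roots. First I would record the trivial but crucial bound $k,\ell\le N$: since $a_1,\dots,a_k$ are distinct elements of the block $A$ of size at most $N$, we have $k\le|A|\le N$, and likewise $\ell\le N$. Next I would parametrise each block by its least element and its size. Writing $\alpha$ for the least element of $A$ and $\beta$ for the least element of $B$, the chosen elements can be written as $a_i=\alpha+c_i$ and $b_j=\beta+d_j$ with distinct offsets $c_1,\dots,c_k$ and $d_1,\dots,d_\ell$ lying in $\{0,1,\dots,N-1\}$. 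Setting
$$
f(t)=\prod_{i=1}^k(t+c_i),\qquad g(t)=\prod_{j=1}^\ell(t+d_j),
$$
the relation \eqref{eq_pr} becomes exactly $f(\alpha)=g(\beta)$, an equation in the integer unknowns $\alpha,\beta$.

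The key step is that for each fixed \emph{pattern} — that is, each fixed choice of $k,\ell$ and of the offset sets $\{c_i\},\{d_j\}$ with $k\nmid 2\ell$ — the polynomials $f$ and $g$ have only simple integer, hence rational, roots (the roots $-c_i$ are distinct, and so are the $-d_j$), with $\deg(f)=k<\ell=\deg(g)$. Were the equation $f(x)=g(y)$ to possess infinitely many integer solutions, it would in particular have infinitely many rational solutions with a bounded denominator, and then the second statement of Theorem~\ref{caseA} would force $k=\deg(f)\mid 2\deg(g)=2\ell$, contradicting $k\nmid 2\ell$. Hence, for every such pattern, the equation $f(\alpha)=g(\beta)$ has only finitely many integer solutions $(\alpha,\beta)$.

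It then remains to package this into a count of block pairs. The offsets range over subsets of $\{0,\dots,N-1\}$ and $k,\ell\le N$, so there are only finitely many patterns; discarding those with $k\mid 2\ell$ leaves a finite list, each contributing finitely many solutions $(\alpha,\beta)$ by the previous paragraph. A block pair $(A,B)$ is recovered from the quadruple $(\alpha,|A|,\beta,|B|)$, and by hypothesis each qualifying pair admits at least one witness whose pattern has $k\nmid 2\ell$. Mapping such a pair to this pattern together with $(\alpha,\beta)$ and the two sizes $|A|,|B|\in\{1,\dots,N\}$ is injective and exhibits the set of qualifying pairs as a subset of a finite set, which proves the theorem.

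I expect the entire mathematical substance to be concentrated in the single invocation of Theorem~\ref{caseA}; the remainder is combinatorial bookkeeping. The only point genuinely requiring care is the reduction to finitely many polynomial equations: one must observe that the hypothesis $k\nmid 2\ell$ is precisely the obstruction that Theorem~\ref{caseA} excludes, and that the bound $N$ on the block sizes simultaneously bounds the degrees $k,\ell$ and confines the offsets to a finite range, so that only finitely many equations $f(x)=g(y)$ can arise. Boundary cases in which a product vanishes — some chosen element equals $0$ — need no separate treatment, since they correspond to finitely many integer solutions of the relevant $f(x)=g(y)$ and are therefore already subsumed in the count above.
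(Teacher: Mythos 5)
Your proof is correct and follows essentially the same route as the paper: fix the degrees $k,\ell\le N$ and the bounded offset pattern (finitely many choices by pigeonhole), translate \eqref{eq_pr} into $f(\alpha)=g(\beta)$ for polynomials with simple rational roots, and derive the contradiction $k\mid 2\ell$ from the divisibility consequence of Theorem~\ref{caseA}. The only cosmetic difference is that the paper invokes its refinement Theorem~\ref{thmnew} (giving $\deg(F)\le 2$, hence $k\mid 2\ell$) rather than the second statement of Theorem~\ref{caseA} directly, but these are the same fact.
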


\begin{proof} Suppose the statement of the theorem is false for $N$. We may clearly assume that $k$ and $\ell$ are fixed and that
$$
 a_1<\dots<a_k\ \ \ \text{and}\ \ \ b_1<\dots<b_\ell.
$$
Then we may assume as well that the differences
$$
c_i:=a_i-a_1\ (1<i\leq k)\ \ \ \text{and}\ \ \ d_j:=b_j-b_1\ (1<j\leq \ell)
$$
are fixed. Therefore the equation
$$
f(x) :=x(x+c_1)\dots (x+c_{k-1})=y(y+d_1)\dots (y+d_{\ell-1}) =: g(y)
$$
would have infinitely many solutions in rationals $x,y$ with a bounded denominator. 
By Theorem \ref{thmnew} the corresponding standard pair $(F,G)$ satisfies $\deg(F) \leq 2$. This implies $k\mid 2\ell$, and the statement follows.
\end{proof}

\noindent
{\bf Remark 10.1.}
Example 5.1 provides examples with $k\mid \ell$ such that \eqref{eq_pr} has infinitely many integral solutions. Here $k$ can be arbitrarily large. Examples 5.6 and 9.2 provide examples with $k \mid 2\ell, k \nmid \ell$, and \eqref{eq_pr} has infinitely many integral solutions. 

\section{Conclusions and open problems}

In this paper we have studied equation \eqref{maineq} subject to \eqref{fx} and sometimes \eqref{gy}. In the introduction we formulated the main theorem which demonstrates that the possibilities are restricted, even very restricted if \eqref{gy} holds too. In Section \ref{secbt} we stated the fundamental theorem of Bilu-Tichy that attaches a standard pair of polynomials $(F,G)$ to each equation $f(x)=g(y)$ which has infinitely many solutions. Here and below solutions means solutions in rationals with a bounded denominator. There are five kinds of standard pairs. Lemma \ref{btcons} shows that the equation $f(x)=g(y)$ under \eqref{fx} cannot have infinitely many solutions with the fifth standard pair.
In Sections \ref{sec12} and \ref{sec12fg} we treat the equations leading to standard pairs of the first and second kind. 
Sections \ref{sec23} and \ref{sec34} deal with standard pairs of the third and fourth kind. In Section \ref{sec23} we prove the first statement of Theorem \ref{caseA}. In Section \ref{secnew} we obtain a refinement of the main theorem. Finally we have given an application of the obtained results in Section \ref{seceb}.
\vskip.2cm
Suppose equation \eqref{maineq} for $f(x), g(x) \in \Q[x]$ admits infinitely many integral solutions $(x,y)$ with $f$ subject to \eqref{fx}. 
Put $s = \gcd(\deg(f), \deg(g))$, $m = \deg(f)/s, n= \deg(g)/s.$ At the end of Section 7 we have proved that $m \in \{1,2,3,4,6\}$ or $n \in \{1,2\}$. Moreover we have argued that every pair $(\deg(f), \deg(g))$ with corresponding $m \in \{1,2,3,4,6\}$ can be represented. 
\vskip.1cm
\noindent {\bf Problem 1.} Which other possibilities are there for $m,s$, if $n = 1$ or $2$ for equation \eqref{maineq} under \eqref{fx}?

\vskip.2cm
Now let $f(x), g(x) \in \Q[x]$ both have only simple rational roots and equation \eqref{caseA} have infinitely many integral solutions. We assume $\deg(f) \leq \deg(g)$, hence $m \leq n$. Theorem 1.1 implies $m \in \{1,2\}$. Note that the cases $m=s=1, n$ arbitrary, $m=n$, $s$ arbitrary and $m=1, n=2$, $s$ arbitrary are trivial, the latter in view of
$$(x-b_1^2) \cdots (x-b_s)^2 = (y^2 - b_1^2) \cdots (y^2 - b_s^2), F(x) =x, G(y)=y^2$$
with solutions $(X^2,X)$ for $X \in \Z$. Example 6.1 shows that the cases $m=1$, $n \in \{3,4,6\}$, $s$ arbitrary are possible, cf. Remark 7.2. Example 5.6 deals with the case $m=2$, $n=3$, $s$ arbitrary. Using ideal PTE pairs Example 9.1 can be extended to the cases $m=1$, $n \in \{5, 7, 8, 9, 10, 12\}$, $s=2$ and Example 9.2 to the cases $m=2$, $n \in \{5,7,9\}$, $s=1.$ 

\vskip.1cm
\noindent {\bf Problem 2.} Which other possibilities are there for triples $m, n, s$ for equation \eqref{maineq} under \eqref{fx} and \eqref{gy}?


\section*{Acknowledgements}

Lajos Hajdu would like to express his thanks to the R\'enyi Institute, where he was a visiting professor during this research. The authors thank Szabolcs Tengely for some useful remarks.

\end{document}